\newcommand{\cA}{\mathcal{A}}
\newcommand{\cF}{\mathcal{F}}
\newcommand{\cO}{\mathcal{O}}
\newcommand{\bC}{\mathbb{C}}
\newcommand{\bF}{\mathbb{F}}
\newcommand{\bK}{\mathbb{K}}
\newcommand{\bP}{\mathbb{P}}
\newcommand{\bR}{\mathbb{R}}
\newcommand{\be}{\begin{equation}}
\newcommand{\ee}{\end{equation}}
\newcommand{\bal}{\begin{align}}
\newcommand{\eal}{\end{align}}
\newcommand{\ba}{\begin{align*}}
\newcommand{\ea}{\end{align*}}
\newcommand{\bmx}{\begin{matrix}}
\newcommand{\emx}{\end{matrix}}
\newcommand{\bbmx}{\begin{bmatrix}}
\newcommand{\ebmx}{\end{bmatrix}}
\newcommand{\bpmx}{\begin{pmatrix}}
\newcommand{\epmx}{\end{pmatrix}}
\newcommand{\bvmx}{\begin{vmatrix}}
\newcommand{\evmx}{\end{vmatrix}}
\newcommand{\ol}{\overline}
\newcommand{\wt}{\widetilde}
\newcommand{\f}{\frac}
\newcommand{\df}{\dfrac}
\newcommand{\im}{\Rightarrow}
\newcommand{\inc}{\subseteq}
\newcommand{\setm}{\setminus}
\renewcommand{\Re}{\operatorname{Re}}
\newcommand{\la}{\lambda}
\newcommand{\eps}{\varepsilon}
\begin{document}

\title*{
Linearly Embedding Sparse Vectors from $\ell_2$ to~$\ell_1$
via Deterministic Dimension-Reducing Maps 
}
\titlerunning{Linearly Embedding Sparse Vectors from $\ell_2$ to~$\ell_1$}
\author{Simon Foucart}
\institute{Simon Foucart \at Texas A\&M University,  College Station\\\email{foucart@tamu.edu}
}
%
%
\maketitle

\abstract{This note is concerned with deterministic constructions of $m \times N$ matrices satisfying a restricted isometry property from $\ell_2$ to $\ell_1$ on $s$-sparse vectors.
Similarly to the standard ($\ell_2$ to $\ell_2$)
restricted isometry property,
such constructions can be found in the regime $m \asymp s^2$,
at least in theory.
With effectiveness of implementation in mind,
two simple constructions are presented in the less pleasing but still relevant regime $m \asymp s^4$.
The first one,
executing a Las Vegas strategy, is quasideterministic and applies in the real setting.
The second one, exploiting Golomb rulers,
is explicit and applies to the complex setting.
As a stepping stone, an explicit isometric embedding from $\ell_2^n(\bC)$ to $\ell_4^{cn^2}(\bC)$ is presented.
Finally, the extension of the problem from sparse vectors to low-rank matrices is raised as an open question.}

\section{Motivation from Sparse Vector Recovery}

Almost twenty years ago
\cite{CRT,Don},
the realization that high-dimensional but sparse vectors could be efficiently recovered from far fewer linear measurements than expected created a prolific field of research now known as compressive sensing (or compressed sensing).
To be specific,
vectors $x \in \bK^N$, $\bK \in \{\bR, \bC\}$,
are called $s$-sparse if
$$
\|x\|_0 := |{\rm supp}(x)| \le s,
\qquad \mbox{where }
{\rm supp}(x) := \{ j \in [1 : N]: x_j \not= 0\}.
$$
Such vectors can be recovered from compressive measurements $A x \in \bK^m$ with $m$ being of the order of $s \ln(N/s) \ll N$.
One refers to \cite{BookCS} for all the nitty-gritty details.
One simply mentions here that,
on the one hand,
the order $s \ln(N/s)$ cannot be lowered if one requires the recovery to be stable
and, 
one the other hand, 
that random measurement matrices $A \in \bK^{m \times N}$ with $m \asymp s \ln(N/s)$ fulfill, 
with high probability,
favorable properties that make $s$-sparse recovery possible.
Thus, there is an abundance of matrices suitable for compressive sensing in the optimal regime $m \asymp s \ln(N/s)$,
but somehow the mathematical community is unable to pinpoint a single one!

The most popular favorable property---the restricted isometry property (RIP),
introduced in \cite{CanTao}---
stipulates that the matrix $A \in \bK^{m \times N}$
should satisfy
\be
\label{SRIP}
(1-\delta) \|x\|_2^2 
\le \| A x\|_2^2  \le (1+\delta) \|x\|_2^2 
\qquad \mbox{for all $s$-sparse } x \in \bK^N. 
\ee
This standard version of the RIP is ubiquitous in ensuring the success of sparse recovery via a variety of reconstruction algorithms,
such as 
$\ell_1$-minimization (aka basis pursuit),
orthogonal matching pursuit (OMP),
compressive sampling matching pursuit (CoSaMP),
iterative hard thresholding (IHT),
hard thresholding pursuit (HTP),
to name but a few.
It can be interpreted as saying that the linear map $x \mapsto Ax$ provides an embedding from $\ell_2^N$ to $\ell_2^m$ with a distortion on $s$-sparse vectors equal to $\gamma  = (1+\delta)/(1-\delta) \ge 1$.
The latter can be made arbitrarily close to one by taking $\delta>0$ small enough.
However,
the distortion need not be close to one to enable sparse recovery:
a requirement $\gamma \le \gamma^*$ for a fixed threshold $\gamma^* \ge 1$ is enough.
Likewise,
the embedding need not map into $\ell_2^m$:
any $\ell_p^m$ with $0 < p \le 2$ will be convenient.
This note concentrates on the case $p=1$---interestingly, this version appeared in~\cite{Don}.
Thus,
instead of the standard version of the RIP, i.e., \eqref{SRIP},
one considers
an `$\ell_2$ to $\ell_1$' RIP
stipulating that $A \in \bK^{m \times N}$ should satisfy
\be
\label{RIP21}
\alpha \|x\|_2 
\le \| A x\|_1  \le \beta \|x\|_2 
\qquad \mbox{for all $s$-sparse } x \in \bK^N
\ee
with distortion on $s$-sparse vectors bounded by a fixed threshold,
say $\gamma = \beta/\alpha \le \gamma_*$.
I am an advocate of this alternative version, for several reasons:

\begin{itemize}
\item the theory of sparse recovery can be built from \eqref{RIP21},
and not only for basis pursuit as presented in \cite[Chapter 14]{BookDS},
but also for iterative hard thresholding, see \cite{FouLec};

\item the theory of one-bit compressive sensing can be built from \eqref{RIP21} as well,
as presented \cite[Chapter 17]{BookDS},
so long as the distortion can be made close to one,
which is the case if $A$ is a Gaussian matrix or a partial Gaussian circulant matrix~\cite{DJR};

\item Laplace matrices (more generally subexponential random matrices) satisfy~\eqref{RIP21} in the optimal regime $m \asymp s \ln(N/s)$, see \cite{FouLai},
while \eqref{SRIP} would only hold in the suboptimal regime 
$m \asymp s \ln^2(N/s)$;

\item any ensemble of random matrices $A \in \bK^{m \times N}$ yielding \eqref{SRIP} with exponentially small failure probability when $m \asymp s \ln(N/s)$
also yields \eqref{RIP21} with exponentially small failure probability when $m \asymp s \ln(N/s)$,
see \cite{LASSO} for the precise statement. 

\end{itemize}

This last point suggests that the `$\ell_2$ to $\ell_1$' RIP is easier to fulfill than the standard `$\ell_2$ to $\ell_2$' RIP,
so it is plausible that deterministic constructions of RIP matrices are more accessible through the `$\ell_2$ to $\ell_1$' avenue.
Since deterministic constructions exist for the standard RIP with $m \asymp s^{2-\eps}$ (see next section),
the same is expected to hold for the `$\ell_2$ to $\ell_1$' RIP.
This is indeed the case,
but through a construction that may be considered inadequate, 
because it combines two fairly theoretical results that are, in my view, not as explicit as hoped for.
The purpose this note is to exhibit a couple of simple constructions of matrices that fulfill the `$\ell_2$ to $\ell_1$' RIP in the regime $m \asymp  s^4$.
This is not the desired regime, for sure,
but the advantage here is the simplicity of the constructions.
This simplicity is validated by the few-lines {\sc matlab} implementation found in the associated  reproducible file
(available on the author's webpage).

The rest of this note is organized as follows.
Section \ref{SecDetK} discusses some known facts about deterministic embeddings.
Section \ref{SecEmbR} presents, in the case $\bK = \bR$,
the first deterministic construction of an embedding of $s$-sparse vectors from $\ell_2^N$ to $\ell_1^m$ with $m \asymp s^4$.
Section \ref{SecFullEmb} uncovers, in case $\bK = \bC$,
an explicit isometric embedding from $\ell_2^p$ to $\ell_4^m$ with $m \asymp p^2$, $p$ being a prime number.
Section \ref{SecEmbC} exploits this isometric embedding---or rather the argument leading to it---to present, in the case $\bK = \bC$,
a second deterministic, and actually explicit, construction 
of an embedding of $s$-sparse vectors from $\ell_2^N$ to $\ell_1^m$ with $m \asymp s^4$.
Section \ref{SecLowRk} briefly touches on the mostly uncharted territory of deterministic restricted isometry properties for low-rank matrices.
Finally, as an aside, Section \ref{SecLinks} recalls a connection between (almost) isometric embeddings from $\ell_2^n$ to $\ell_{2k}^N$ and (approximate) spherical designs.

\section{Known Deterministic Results}
\label{SecDetK}

The mathematical community's incapability to create nonrandom matrices fulfilling the standard RIP in the optimal regime is vexing.
Nonrandom procedures are stuck in the quadratic regime $m \asymp s^2$---in truth, $m \asymp s^2 {\rm polylog}(N)$.
There are several ways to reach this regime,
mostly based on the notion of coherence.
The coherence of a matrix $A \in \bK^{m \times N}$ with unit $\ell_2$-norm columns $a_1,\ldots,a_N \in \bK^m$
is defined by 
$$
\mu(A) := \max_{j \not= \ell} | \langle a_j, a_\ell \rangle|.
$$
Indeed, to ensure that $\delta_s(A)$,
the smallest $\delta \in (0,1)$ for which \eqref{SRIP} holds, obeys $\delta_s(A) < \delta_*$ as soon as $m \ge C s^2$,
it is sufficient make the coherence small as per 
$$
\mu(A) \le \f{c}{\sqrt{m}},
\qquad c:= \sqrt{C} \, \delta_*,
$$
by virtue of the inequality $\delta_s(A) < s \mu(A)$ (see e.g. \cite[Proposition 6.2]{BookCS}).
As examples of matrices with small coherence, let me mention
\begin{itemize}
\item an $m \times m^2$ matrix with columns formed by translations and modulations of the Alltop vector  (\cite{StrHea}, see also \cite[Proposition 5.13]{BookCS}):
its coherence is $\mu(A) = 1 /\sqrt{m}$;
\item when $p$ is prime and $d < p$,
a $p^2 \times p^{d+1}$ matrix with binary entries in $\{0,1/\sqrt{p}\}$ (\cite{DeV}, see also \cite[Theorem 18.5]{BookDS}):
its coherence is $\mu(A) \le d/p = d/\sqrt{m}$;
\item when $p$ is prime and $d < p$,
a $p \times p^{d+1}$ matrix with entries
\be
\label{EntriesWeil}
A_{k,f} = \f{1}{\sqrt{p}} \exp \left( i 2 \pi \f{k f(k)}{p} \right)
\ee
indexed by elements $k$ of $\bF_p$
and by polynomials $f$ over $\bF_p$ of degree at most $ d$:
its coherence is $\mu(A) \le d/\sqrt{p} = d/\sqrt{m}$.
The argument is simple but relies on a deep result known as Weil bound (see e.g.  \cite[Proposition 5.3.8]{NieWin}),
which says that,
if $f$ is a nonconstant polynomial over $\bF_p$,
then
$$
\bigg|
\sum_{k \in \bF_p} \exp \left( i 2 \pi \f{f(k)}{p} \right)
\bigg|
\le (\deg(f)-1) \sqrt{p}.
$$

\end{itemize}

There is a notable explicit construction that overcomes, albeit ever so slightly, 
the quadratic barrier.
Indeed, the article \cite{BDFKK}
uncovered an RIP matrix with $m \asymp s^{2-\eps}$ rows,
where $\eps > 0$ was tiny.
In a practitioner's mind,
this is viewed as an issue,
but in fact not as the most critical one:
one also has $m \ge N^{1-\eps}$,
making the matrix almost square and thus defeating 
the compressive sensing purpose of taking far fewer measurements than the high ambient dimension.
A similar issue occurs in~\cite{BMM}:
there, conditionally on a folklore conjecture in number theory,
it was shown that the quadratic barrier would be overcome by the Paley matrix fulfilling the `$\ell_2$ to $\ell_2$' RIP,  
but this matrix has $m = N/2$ rows.

Turning now to the deterministic `$\ell_2$ to $\ell_1$' RIP,
which has not been explored,
at least to the best of my knowledge,
one can follow the indirect strategy below:
\begin{itemize}
\item[(i)] \, consider a deterministic linear map $A': \ell_2^N \to \ell_2^n$ with constant distortion on $s$-sparse vectors for $n \asymp s^\eta$;
\item[(ii)] \, consider a deterministic linear map $A'': \ell_2^n \to \ell_1^m$ with constant distortion on arbitrary vectors for $m \asymp n^\theta$.
\end{itemize}
Then,
the deterministic map $A := A'' \circ A': \ell_2^N \to \ell_1^m$ has constant distortion on $s$-sparse vectors for $m \asymp s^{\eta \theta}$.
For~(i), one can take $\eta = 2 - \eps$ according to \cite{BDFKK}.
For~(ii), according to \cite{Ind} (or \cite{GLR},
which both rely on \cite{GUV}),
one can take $\theta = 1+\omega$ for any $\omega > 0$.
These choices yields an `$\ell_2$ to $\ell_1$' RIP in the regime $m \asymp s^{(2-\eps)(1+\omega)}$ for any $\omega >0$,
hence overcoming the quadratic barrier here, too.
This is quite compelling until the hands-on stage,
because the theoretical results underlying the argument are not easily implementable,
despite being deterministic 
(which often means constructible in polynomial time).

The general strategy is still valid, though,
and one can gain in explicitness by giving up on the smallest $\nu$ in $m \asymp s^\nu$.
This is the spirit of Section \ref{SecEmbC}, which obtains $\nu=4$ by way of taking $\eta = 2$ in~(i) and $\theta = 2$ in~(ii).
For the first step,
one can exploit any of the small-coherence matrices listed above.
For the second step,
one relies on the existence of deterministic constant-distortion linear embeddings from $\ell_2^n$ to $\ell_1^{c n^2}$.
Such an existence result is stated in \cite[Lemma 3.3]{LLR} by citing \cite{Ber} for a construction of $4$-wise independent families of vectors,
which I would qualify as deterministic but not explicit.
The article \cite{GLR} also claims without details that such an existence result can be extracted from \cite{Rud}.
Section \ref{SecEmbC} will actually provide such an explicit map embedding $\ell_2^n$ into $\ell_1^{c n^2}$.
I suspect this construction to be very close to what should have been extracted from \cite{Rud}.
It will yield an explicit embedding on sparse vectors taking place in the complex setting.
In the real setting, 
the construction presented next in Section \ref{SecEmbR} is more direct,
as it bypasses (i)-(ii),
but I would qualify it as quasideterministic rather than explicit.
Both constructions pertain to the regime $m \asymp s^4$.
This should definitely be improved to $m \asymp s^\nu$ with $\nu \le 2$,
but for the moment there is comfort in the simplicity of the constructions.

\section{Simple embedding of $s$-sparse vectors from $\ell_2^N(\bR)$ to $\ell_1^{c s^4}(\bR)$}
\label{SecEmbR}

The construction proposed in this section is
not explicit, but rather (quasi)deterministic, in the sense that it is outputted by the following Las Vegas algorithm:

\vspace{3mm}
\noindent
for $t=1,2,\ldots$
\begin{itemize}
\item draw $A \in \bR^{m \times N}$ populated with independent Rademacher random  variables,
\item
check if conditions (a)-(b) of Theorem \ref{Thm1} below are satisfied for $\kappa = \sqrt{8 \ln(N)}$,
\end{itemize}
and if they are, then return $A$.

The success of this procedure is guaranteed by the fact 
that conditions (a)-(b) certify that $A$ yields an embedding on $s$-sparse vectors from $\ell_2^N$ to $\ell_1^m$ (by Theorem~\ref{Thm1}), together with the fact that each draw of $A$ satisfies (a)-(b) for $\kappa = \sqrt{8 \ln(N)}$ with failure probability at most $1/3$,
so the failure probability after $T$ independent rounds is at most $(1/3)^T$.
This vanishing quantity is never exactly zero,
hence the procedure cannot technically be qualified as deterministic,
but quasideterministic sounds like a suitable designation. 
Note that the verification of (a)-(b) can be performed in polynomial time,
precisely in $\cO(N^4m)$ multiplications.

\begin{theorem}
\label{Thm1}
Let $A \in \bR^{m \times N}$ be populated with entries $A_{j,k} = \pm 1$.
Assume that
\begin{itemize}
\item[{\rm (a)}] 
\; $\displaystyle{ \bigg| \sum_{j=1}^m A_{j,k} A_{j,k'} \bigg| \le \kappa \sqrt{m}}\qquad \qquad \qquad $ for all distinct $ k, k' \in [1:N]$,
\item[{\rm (b)}] 
\; $\displaystyle{ \bigg| \sum_{j=1}^m A_{j,k} A_{j,k'} A_{j,\ell} A_{j,\ell'} \bigg| \le \kappa \sqrt{m}}\qquad \,$ for all distinct $k, k',\ell, \ell' \in [1:N]$.
\end{itemize}
Then, for any $\delta \in (0,1)$, the linear map $A: \ell_2^N \to \ell_1^m$ has the property that
$$
\alpha m \|x\|_2 
\le \| A x\|_1  \le \beta m \|x\|_2 
\qquad \quad \mbox{for all $s$-sparse } x \in \bR^N,
$$
with distortion $ \gamma = \beta/\alpha \le \sqrt{3} \big( (1+\delta)/(1-\delta) \big)^{3/2}$
as soon as $m \ge \kappa^2 \delta^{-2} s^4$.
\end{theorem}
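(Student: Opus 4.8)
The plan is to route the $\ell_1$ estimate through the second and fourth moments of the vector $y := Ax$, i.e.\ through $\|y\|_2$ and $\|y\|_4$, and then to convert these into a bound on $\|y\|_1$ by interpolation. The two moments are accessible because expanding $\|y\|_2^2$ and $\|y\|_4^4$ produces sums of entrywise products of $A$ that conditions (a)-(b) are tailor-made to control. Concretely, I expect to establish, for every $s$-sparse $x$, the two-sided estimate
\be
(1-\delta)\, m \|x\|_2^2 \le \|Ax\|_2^2 \le (1+\delta)\, m\|x\|_2^2
\ee
together with the one-sided estimate
\be
\|Ax\|_4^4 \le (3+\delta)\, m \|x\|_2^4,
\ee
both valid once $m \ge \kappa^2 \delta^{-2} s^4$.

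For the second moment, writing $\|Ax\|_2^2 = \sum_{k,k'} x_k x_{k'} \sum_j A_{j,k}A_{j,k'}$ and separating the diagonal $k=k'$ (which contributes exactly $m\|x\|_2^2$ since $A_{j,k}^2=1$) from the off-diagonal part, condition (a) bounds each off-diagonal inner sum by $\kappa\sqrt m$. Because $x$ is supported on a set of size $s$, the off-diagonal remainder is at most $\kappa\sqrt m\, \|x\|_1^2 \le \kappa\sqrt m\, s\|x\|_2^2$, and this is $\le \delta m \|x\|_2^2$ as soon as $m \ge \kappa^2 \delta^{-2} s^2$, a fortiori under the hypothesis. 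This step is routine.

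The heart of the argument is the fourth moment. Expanding $\|Ax\|_4^4 = \sum_{k,k',\ell,\ell'} x_k x_{k'} x_\ell x_{\ell'} \sum_j A_{j,k}A_{j,k'}A_{j,\ell}A_{j,\ell'}$ and using $A_{j,k}^2 = 1$, I would classify each quadruple of column indices by its pattern of coincidences. The patterns in which the indices split into two pairs, or all four agree, turn the inner sum into exactly $m$; summing their $x$-weights reproduces the ``expected'' main term $m\big(3\|x\|_2^4 - 2\|x\|_4^4\big) \le 3m\|x\|_2^4$, exactly as for genuine Rademacher rows. Every remaining pattern leaves an inner sum that reduces---via $A_{j,k}^3 = A_{j,k}$---either to a pairwise sum controlled by (a) or to a genuinely four-index sum controlled by (b); in all cases its modulus is at most $\kappa\sqrt m$. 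The total error is therefore at most $\kappa\sqrt m \sum_{k,k',\ell,\ell'\in \mathrm{supp}(x)} |x_k x_{k'}x_\ell x_{\ell'}| = \kappa\sqrt m\, \|x\|_1^4 \le \kappa\sqrt m\, s^2\|x\|_2^4$, which is $\le \delta m\|x\|_2^4$ precisely when $m \ge \kappa^2 \delta^{-2} s^4$. This bookkeeping---keeping the five coincidence patterns straight and checking that the pairing terms assemble into the clean expression $3\|x\|_2^4 - 2\|x\|_4^4$---is the main obstacle, and it is what forces the quartic regime $m \asymp s^4$ (the quadratic $s^2$ sufficing only for the second moment).

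It remains to combine the moments. The lower bound uses the interpolation inequality $\|y\|_2 \le \|y\|_1^{1/3}\|y\|_4^{2/3}$, rearranged as $\|y\|_1 \ge \|y\|_2^3 / \|y\|_4^2$; feeding in the lower bound on $\|Ax\|_2$ together with $\|Ax\|_4^4 \le (3+\delta) m\|x\|_2^4 \le 3(1+\delta) m\|x\|_2^4$ yields $\|Ax\|_1 \ge \alpha m\|x\|_2$ with $\alpha = (1-\delta)^{3/2}/\sqrt{3(1+\delta)}$. The upper bound is simpler: Cauchy--Schwarz gives $\|Ax\|_1 \le \sqrt m\,\|Ax\|_2 \le \sqrt{1+\delta}\, m\|x\|_2$, so $\beta = \sqrt{1+\delta}$. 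Taking the ratio gives $\gamma = \beta/\alpha = \sqrt 3\,(1+\delta)/(1-\delta)^{3/2}$, which is bounded by $\sqrt 3\big((1+\delta)/(1-\delta)\big)^{3/2}$ since $1+\delta \le (1+\delta)^{3/2}$, establishing the claimed distortion.
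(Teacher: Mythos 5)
Your proposal is correct and follows essentially the same route as the paper: control $\|Ax\|_2^2$ via (a), control $\|Ax\|_4^4$ via (a)--(b), and pass to $\|Ax\|_1$ through the interpolation $\|y\|_1 \ge \|y\|_2^3/\|y\|_4^2$ together with $\|Ax\|_1 \le \sqrt{m}\,\|Ax\|_2$. The only (immaterial) difference is organizational: you classify the quadruples $(k,k',\ell,\ell')$ by coincidence pattern and extract the main term $m(3\|x\|_2^4 - 2\|x\|_4^4)$ by inclusion--exclusion, whereas the paper packages the same expansion into a separate lemma reused later in the complex setting; your direct bookkeeping even yields the marginally sharper constant $3+\delta$ in place of $3+3\delta$.
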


Before justifying this theorem,
it is worth observing that its assumptions (a)-(b) are indeed fulfilled when $A \in \bR^{m \times N}$ is a Rademacher random matrix and $\kappa = \sqrt{8 \ln(N)}$.
Fixing distinct $k, k'$ in $[1:N]$,
the $A_{j,k} A_{j,k'}$, $j \in [1:m]$,
are independent Rademacher variables,
so by Hoeffding inequality (see e.g. \cite[Corollary 8.8]{BookCS})
$$
\bP \left[  \bigg| \sum_{i=1}^m A_{i,j_1} A_{i,j_2}  \bigg| \ge \kappa \sqrt{m}
\right] \le 2 \exp \left(- \f{\kappa^2}{2} \right). 
$$
Hence, by a union bound, one derives
$$
\bP[ \mbox{(a) fails}]
\le \binom{N}{2} \, 2 \exp\left(- \f{\kappa^2}{2}\right) \le N^2 \exp\left(- \f{\kappa^2}{2} \right) = \f{1}{N^2}.
$$
Likewise,
fixing distinct $k,k',\ell,\ell' \in [1:N]$,
the $A_{j,k}A_{j,k'} A_{j,\ell} A_{i,\ell'}$,
$j \in [1:m]$,
are independent Rademacher variables,
so Hoeffding inequality followed by a union bound once again yields
$$
\bP[ \mbox{(b) fails} ]
\le \binom{N}{4} \, 2 \exp\left(- \f{\kappa^2}{2}\right) \le \f{N^4}{12} \exp\left(- \f{\kappa^2}{2} \right) = \f{1}{12}.
$$
Consequently, (a) and (b) are indeed both fulfilled with failure probability bounded, for $N \ge 2$, as
$$
\bP [\mbox{(a) or (b) fail} ]
\le \f{1}{N^2} + \f{1}{12} \le \f{1}{3}.
$$

Before turning the attention to the proof of Theorem \ref{Thm1},
it is also worth isolating
two key ingredients as separate lemmas,
one being a technical calculation to be reused later 
and the other one being a way to estimate the $\ell_1$-norm from below via the $\ell_2$- and $\ell_4$-norms.

\begin{lemma}
\label{LemB}
Let $B \in \bC^{q \times r}$ with $|B_{j,k}| = 1$ for all $j \in [1:q]$ and $k \in [1:r]$.
Then, for any $x \in \bC^r$, one has
\begin{align}
\label{IdenL2}
\|Bx\|_2^2
& = q \|x\|_2^2 + \sum_{1 \le k \not= k' \le r} \bigg( \sum_{j=1}^q \ol{B_{j,k}} B_{j,k'}\bigg) \ol{x_k} x_{k'} ,\\
\label{IdenL4}
\|Bx\|_4^4
& = 2 \|x\|_2^2 \|Bx\|_2^2 - q \|x\|_4^4 + \Sigma_1\\
\label{IdenL4bis}
& = 2 \|x\|_2^2 \|Bx\|_2^2 - q \|x\|_4^4 
+  \sum_{k \not= k'} \bigg( \sum_{j=1}^q \ol{B_{j,k}}^2 B_{j,k'}^2   \bigg) \ol{x_k}^2 x_{k'}^2
+ \Sigma_2,
\end{align}
where the quantities $\Sigma_1$ and $\Sigma_2$ are given by
\begin{align*}
\Sigma_1 & := \sum_{(k\not= k') \not= (\ell \not= \ell')} 
\bigg( \sum_{j=1}^q \ol{B_{j,k}} B_{j,k'} B_{j,\ell} \ol{B_{j,\ell'}}  \bigg) \ol{x_k} x_{k'} x_\ell \ol{x_{\ell'}},\\
\Sigma_2 & := \sum_{\substack{(k\not= k') \not= (\ell \not= \ell') \\ (k\not= k') \not= (\ell' \not= \ell)}} 
\bigg( \sum_{j=1}^q \ol{B_{j,k}} B_{j,k'} B_{j,\ell} \ol{B_{j,\ell'}}  \bigg) \ol{x_k} x_{k'} x_\ell \ol{x_{\ell'}}.
\end{align*}
\end{lemma}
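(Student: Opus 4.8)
The plan is to derive all three identities from the single decomposition of $|(Bx)_j|^2$ afforded by \eqref{IdenL2}, collapsing every diagonal contribution via the hypothesis $|B_{j,k}| = 1$ at each stage. Identity \eqref{IdenL2} itself I would obtain by expanding $\|Bx\|_2^2 = \sum_j |(Bx)_j|^2 = \sum_j \sum_{k,k'} B_{j,k}\ol{B_{j,k'}} x_k \ol{x_{k'}}$ and splitting off the diagonal $k = k'$, where $|B_{j,k}|^2 = 1$ produces the term $q\|x\|_2^2$; the off-diagonal part matches the stated double sum after relabeling $k \leftrightarrow k'$. The output I really want to carry forward is that, writing $y_j := (Bx)_j$, one has $|y_j|^2 = \|x\|_2^2 + d_j$ with $d_j := \sum_{k \neq k'} B_{j,k}\ol{B_{j,k'}} x_k \ol{x_{k'}}$, and that $d_j = |y_j|^2 - \|x\|_2^2$ is \emph{real}.

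For \eqref{IdenL4} I would square this decomposition: $\|Bx\|_4^4 = \sum_j |y_j|^4 = \sum_j (\|x\|_2^2 + d_j)^2 = q\|x\|_2^4 + 2\|x\|_2^2 \sum_j d_j + \sum_j d_j^2$. Since $\sum_j d_j = \|Bx\|_2^2 - q\|x\|_2^2$ by \eqref{IdenL2}, the middle term is $2\|x\|_2^2\|Bx\|_2^2 - 2q\|x\|_2^4$, leaving $\|Bx\|_4^4 = 2\|x\|_2^2\|Bx\|_2^2 - q\|x\|_2^4 + \sum_j d_j^2$. The remaining task is to evaluate $\sum_j d_j^2$. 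Using that $d_j$ is real, I would write $d_j^2 = d_j\ol{d_j}$, so that $\sum_j d_j^2 = \sum_{k \neq k'}\sum_{\ell \neq \ell'} \big(\sum_j B_{j,k}\ol{B_{j,k'}}\,\ol{B_{j,\ell}} B_{j,\ell'}\big) x_k \ol{x_{k'}}\,\ol{x_\ell} x_{\ell'}$; relabeling $k \leftrightarrow k'$ and $\ell \leftrightarrow \ell'$ recasts every summand into the shape appearing in $\Sigma_1$.

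The crux is then a classification of the quadruples $(k,k',\ell,\ell')$ by index coincidences. With this conjugation pattern the inner coefficient $\sum_j(\cdots)$ collapses to $q$ exactly when the unconjugated and conjugated index multisets coincide, which under $k \neq k'$, $\ell \neq \ell'$ forces the diagonal case $(k,k') = (\ell,\ell')$; its $x$-weight is $|x_k|^2|x_{k'}|^2$, so its total contribution is $q\sum_{k \neq k'}|x_k|^2|x_{k'}|^2 = q(\|x\|_2^4 - \|x\|_4^4)$. The complementary terms are precisely $\Sigma_1$, and combining $q(\|x\|_2^4 - \|x\|_4^4)$ with the earlier $-q\|x\|_2^4$ leaves $-q\|x\|_4^4$, which is \eqref{IdenL4}. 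For \eqref{IdenL4bis} I would peel one further case off $\Sigma_1$: the anti-diagonal coincidence $\ell = k'$, $\ell' = k$ (that is, $(k,k') = (\ell',\ell)$), disjoint from the diagonal case because $\ell \neq \ell'$, contributes coefficient $\sum_j \ol{B_{j,k}}^2 B_{j,k'}^2$ and weight $\ol{x_k}^2 x_{k'}^2$; excluding both coincidences leaves exactly $\Sigma_2$.

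The one genuinely delicate point, and the step I would verify most carefully, is the placement of conjugates: the definitions of $\Sigma_1$ and $\Sigma_2$ fix particular bars on the $B$'s and on the $x$'s, and reproducing them hinges on both the relabeling $k \leftrightarrow k'$, $\ell \leftrightarrow \ell'$ and the reality of $d_j$ (so that $d_j^2 = d_j\ol{d_j}$, which is what makes the diagonal and anti-diagonal coefficients collapse to products of moduli). Translating the constraint ``$(k \neq k') \neq (\ell \neq \ell')$'' into ``exclude the diagonal case'' and the additional constraint in $\Sigma_2$ into ``also exclude the anti-diagonal case'' is routine once the two collapsing cases are pinned down, but it is where a sign or conjugate slip would most easily creep in.
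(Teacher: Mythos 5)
Your proposal is correct and follows essentially the same route as the paper's proof: expand $|(Bx)_j|^2 = \|x\|_2^2 + S_j$ with the same off-diagonal remainder, square and use $\sum_j S_j = \|Bx\|_2^2 - q\|x\|_2^2$ together with the reality of $S_j$ to write $S_j^2 = S_j \ol{S_j}$, then peel off the diagonal case $(k,k')=(\ell,\ell')$ for \eqref{IdenL4} and the anti-diagonal case $(k,k')=(\ell',\ell)$ for \eqref{IdenL4bis}. The only cosmetic difference is your conjugation convention for $d_j$ versus the paper's $S_j$, which you correctly reconcile by relabeling.
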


\begin{proof}
For the first identity, 
one writes
\begin{align*}
\|B x\|_2^2 & =
\sum_{j=1}^q \ol{(Bx)_j} (Bx)_j
= \sum_{j=1}^q \bigg( \sum_{k=1}^r \ol{B_{j,k}} \ol{x_k} \bigg)
\bigg( \sum_{k'=1}^r B_{j,k'} x_{k'} \bigg)\\
& = \sum_{j=1}^q \bigg(
\sum_{k=1}^r \ol{B_{j,k}} \ol{x_k} B_{j,k} x_{k}
+ \sum_{k \not= k'} \ol{B_{j,k}} \ol{x_k} B_{j,k'} x_{k'}
\bigg)\\
& = \sum_{j=1}^q \sum_{k=1}^r |B_{j,k}|^2 |x_k|^2
+ \sum_{j=1}^q \sum_{k \not= k'} \ol{B_{j,k}} B_{j,k'} \ol{x_k} x_{k'},
\end{align*}
which, by virtue of $|B_{j,k}| = 1$,
reduces to \eqref{IdenL2} after changing the order of summation.

For the second identity,
one writes
\begin{align}
\nonumber
\|Bx\|_4^4 & = \sum_{j=1}^q \bigg[ \ol{(Bx)_j} (Bx)_j \bigg]^2
= \sum_{j=1}^q \bigg[ \bigg( \sum_{k=1}^r \ol{B_{j,k}} \ol{x_k} \bigg)
\bigg( \sum_{k'=1}^r B_{j,k'} x_{k'} \bigg) \bigg]^2\\
\nonumber
& = \sum_{j=1}^q \bigg[ \sum_{k=1}^r |B_{j,k}|^2 |x_k|^2 + S_j  \bigg]^2
\qquad \qquad \quad
\mbox{where } S_j :=  \sum_{k \not = k'} \ol{B_{j,k}} B_{j,k'} \ol{x_k} x_{k'}
\\
\label{PrepB4}
& = \sum_{j=1}^q \left[ \|x\|_2^2 + S_j \right]^2
= q \|x\|_2^4 + 2 \|x\|_2^2 \sum_{j=1}^q S_j + \sum_{j=1}^q S_j^2.
\end{align}
Taking \eqref{IdenL2} into account, one notices that
\be
\label{SumSj}
\sum_{j=1}^q S_j = \|Bx\|_2^2 - q \|x\|_2^2.
\ee
Next, exploiting the fact that $S_j \in \bR$ and separating the cases $(k\not= k') = (\ell \not= \ell')$ and $(k\not= k') \not= (\ell \not= \ell')$,
one obtains
\begin{align*}
S_j^2 & =  S_j \ol{S_j}\\
& = \sum_{k\not= k'} 
| B_{j,k}|^2 | B_{j,k'}|^2 |x_{k}|^2 |x_{k'}|^2 
+ \sum_{(k\not= k') \not= (\ell \not= \ell')} 
\ol{B_{j,k}} B_{j,k'} B_{j,\ell} \ol{B_{j,\ell'}}
\ol{x_k} x_{k'} x_\ell \ol{x_{\ell'}},
\end{align*}
from where it follows that
\be
\label{SumSj^2}
\sum_{j=1}^q S_j^2
=  \sum_{j=1}^q \bigg( \sum_{k,k'} |x_k|^2 |x_{k'}|^2 - \sum_k |x_k|^4 \bigg) + 
\Sigma_1
= q \left( \|x\|_2^4 - \|x\|_4^4 \right) + \Sigma_1.
\ee
It remains to substitute \eqref{SumSj} and \eqref{SumSj^2} into \eqref{PrepB4} to arrive at \eqref{IdenL4}.

For the last identity, one simply writes
\begin{align*}
\Sigma_1 & = 
\Bigg\{
\sum_{\substack{(k\not= k') \not= (\ell \not= \ell') \\ (k\not= k') = (\ell' \not= \ell)}} 
+ \sum_{\substack{(k\not= k') \not= (\ell \not= \ell') \\ (k\not= k') \not= (\ell' \not= \ell)}}
\Bigg\}
\bigg( \sum_{j=1}^q \ol{B_{j,k}} B_{j,k'} B_{j,\ell} \ol{B_{j,\ell'}}  \bigg) \ol{x_k} x_{k'} x_\ell \ol{x_{\ell'}}\\
& = \sum_{k \not= k'} \bigg( \sum_{j=1}^q \ol{B_{j,k}}^2 B_{j,k'}^2   \bigg) \ol{x_k}^2 x_{k'}^2 
+ \Sigma_2,
\end{align*}
and substituting the latter into \eqref{IdenL4}
directly leads to \eqref{IdenL4bis}. 
\end{proof}

As for the lower estimate of the $\ell_1$-norm,
this is achieved via an upper estimate of the $\ell_4$-norm,
as stated below.

\begin{lemma}
\label{LemL1L4}
For any $y \in \bC^m$, 
$$
\|y\|_1 \ge \f{\|y\|_2^3}{\|y\|_4^2}.
$$
\end{lemma}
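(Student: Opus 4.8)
The plan is to recognize Lemma~\ref{LemL1L4} as a standard interpolation (log-convexity) inequality for the family of $\ell_p$-norms and to derive it from a single well-chosen application of H\"older's inequality. First I would dispose of the degenerate case: if $y = 0$ the claim is trivial (and $\|y\|_4 \not= 0$ whenever $y \not= 0$), so one may assume $y \not= 0$ and clear the denominator, reducing the statement to the equivalent polynomial-looking inequality $\|y\|_2^3 \le \|y\|_1 \, \|y\|_4^2$.

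The key step is then to interpolate the exponent $2$ between the exponents $1$ and $4$. I would write each summand as $|y_i|^2 = |y_i|^{2/3} \cdot |y_i|^{4/3}$ and apply H\"older's inequality to $\|y\|_2^2 = \sum_i |y_i|^{2/3} |y_i|^{4/3}$ with the conjugate exponents $p = 3/2$ and $q = 3$, which are forced by requiring $\f{2}{3} p = 1$ and $\f{4}{3} q = 4$ (equivalently, by the interpolation relation $\f{1}{2} = \f{\theta}{1} + \f{1-\theta}{4}$ with $\theta = 1/3$). This yields
$$
\|y\|_2^2 \le \bigg( \sum_i |y_i| \bigg)^{2/3} \bigg( \sum_i |y_i|^4 \bigg)^{1/3} = \|y\|_1^{2/3} \, \|y\|_4^{4/3}.
$$
Raising both sides to the power $3/2$ gives exactly $\|y\|_2^3 \le \|y\|_1 \, \|y\|_4^2$, and rearranging recovers the stated bound $\|y\|_1 \ge \|y\|_2^3 / \|y\|_4^2$.

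I do not anticipate any genuine obstacle here, as the result is an entirely routine consequence of H\"older's inequality; the only points requiring a modicum of care are the bookkeeping to confirm that the conjugate-exponent pair $(3/2, 3)$ produces precisely the $\ell_1$- and $\ell_4$-norms on the right-hand side, and the trivial handling of the case $y = 0$. One could alternatively invoke the general log-convexity of $p \mapsto \log \|y\|_p$ and read off the case $(p_0, p, p_1) = (1, 2, 4)$ directly, but the explicit H\"older computation above is self-contained and keeps the constants transparent.
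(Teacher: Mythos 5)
Your proof is correct and is essentially identical to the paper's: both apply H\"older's inequality with conjugate exponents $(3/2,3)$ to the splitting $|y_i|^2 = |y_i|^{2/3}\,|y_i|^{4/3}$ to obtain $\|y\|_2^2 \le \|y\|_1^{2/3}\|y\|_4^{4/3}$ and then rearrange. The only difference is cosmetic (your explicit handling of the case $y=0$).
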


\begin{proof}
This is H\"older inequality in disguise.
Namely, one can easily rearrange
$$
\|y\|_2^2 
= \sum_{i=1}^m |y_i|^{1 \times \f{2}{3} + 4 \times \f{1}{3}}
\le \left( \sum_{i=1}^m |y_i|^1 \right)^{\f{2}{3}}
\left( \sum_{i=1}^m |y_i|^4 \right)^{\f{1}{3}}
= \|y\|_1^{\f{2}{3}} \|y\|_4^{\f{4}{3}}
$$
into the announced inequality.
\end{proof}

One is now ready to justify the main result of this section.

\begin{proof}[of Theorem \ref{Thm1}]
Let an $s$-sparse vector $x \in \bR^N$ be fixed throughout the proof.
Applying Lemma \ref{LemB} with the real matrix $A \in \bR^{m \times N}$ taking the role of the complex matrix $B \in \bC^{q \times r}$,
one first observes,
from \eqref{IdenL2},
that
$$
 \|Ax\|_2^2  - m \|x\|_2^2 
= \sum_{ k \not= k' } \bigg( \sum_{j=1}^m A_{j,k} A_{j,k'}\bigg) x_k x_{k'}.
$$
Taking the bound (a) into consideration,
as well as the sparsity of $x$,
yields
\begin{align*}
\left| \|Ax\|_2^2  - m \|x\|_2^2 \right|
\le \sum_{ k \not= k' } \kappa \sqrt{m} |x_k| |x_{k'}|
\le \kappa \sqrt{m} \|x\|_1^2 
\le \f{\kappa s}{\sqrt{m}} m \|x\|_2^2. 
\end{align*}
For $m \ge \kappa^2 \delta^{-2} s^2$
(which is the case since $m \ge \kappa^2 \delta^{-2} s^4$), this implies
\be
\label{IntermRIP22}
\left( 1-\delta \right) m \|x\|_2^2 
\le \|A x\|_2^2 \le
\left( 1+\delta \right) m \|x\|_2^2 .
\ee
As a side note, this is the standard RIP for the renormalized matrix $A/\sqrt{m}$.
It was derived solely from (a),
which is nothing but a coherence assumption for this matrix.
Now, using the standard comparison of the $\ell_1$-norm and $\ell_2$-norm,
one arrives at
\be 
\label{UB}
\|A x\|_1 \le \sqrt{m} \, \|Ax\|_2
\le ( 1+\delta )^{1/2} \, m \|x\|_2.
\ee 
Next, using \eqref{IdenL4bis} in Lemma \ref{LemB}
while taking into account that $(k \not= k') \not= (\ell \not= \ell')$ and $(k \not= k') \not= (\ell' \not= \ell)$ means that $k,k',\ell,\ell'$ are all distinct,
and using assumption~(b) as well,
one can write
\begin{align*}
\|Ax\|_4^4 & \le 2 \|x\|_2^2 \|Ax\|_2^2 + m \|x\|_2^4
+ \sum_{\substack{k,k',\ell,\ell'\\ {\rm all \, distinct}}}
\bigg| \sum_{j=1}^m A_{j,k} A_{j,k'} A_{j,\ell} A_{j,\ell'} \bigg|
|x_k| |x_{k'}| |x_\ell| |x_{\ell'}|\\
& \le (2(1+\delta) + 1) m \|x\|_2^4
+ \kappa \sqrt{m} \|x\|_1^4
\le (3+2\delta ) m \|x\|_2^4
+ \kappa \sqrt{m} s^2 \|x\|_2^4\\
& = \left( 3+2 \delta + \f{\kappa s^2}{\sqrt{m}} \right) m \|x\|_2^4 \le (3+3\delta) m \|x\|_2^4,
\end{align*}
where the last step exploited $m \ge \kappa^2 \delta^{-2} s^4$.
This upper bound on $\|Ax\|_4$,
combined with the lower bound on $\|Ax\|_2$ from \eqref{IntermRIP22},
ensures,
according to Lemma \ref{LemL1L4}, that
\be
\label{LB}
\|Ax\|_1 \ge \f{\big( \|Ax\|_2^2 \big)^{3/2}}{\big( \|Ax\|_4^4 \big)^{1/2}}
\ge \f{\big((1-\delta) m \|x\|_2^2 \big)^{3/2}}{\big(   3(1+\delta) m \|x\|_2^4 \big)^{1/2}} = \left( \f{(1-\delta)^3}{3(1+\delta)} \right)^{1/2} m \|x\|_2.
\ee
The estimates \eqref{UB} and \eqref{LB} together establish the result,
noting in particular the expression of the distortion.
\end{proof}

\begin{remark}
It is unclear if the above argument can be refined to improve the exponent $ \nu = 4$ for the 
regime $m \asymp s^\nu$ where the `$\ell_2$ to $\ell_1$' RIP provably holds.
One point is certain, however:
for a matrix $A \in \bK^{m \times N}$ whose entries all have modulus/absolute value equal to one,
one cannot beat $\nu = 2$
if one estimates the $\ell_1$-norm from below via an upper bound on the $\ell_4$-norm of the form $\|Ax\|_4^4 \le C m \|x\|_2^4$ for all $s$-sparse vectors $x \in \bK^N$.
Indeed, for a fixed $i \in [1:m]$, if $x = A_{S,i}$ represents the $i$th rows of $A$ restricted to a set $S \inc [1:N]$ of size $s$,
then
$$
C m s^2 = C m \|x\|_2^4
\ge \|A x\|_4^4 = \sum_{j=1}^m |\langle A_{:,j}, A_{S,i} \rangle|^4 \ge |\langle A_{:,i}, A_{S,i} \rangle|^4
= s^4,
$$
which forces $m \ge C^{-1} s^2$.
\end{remark}

\section{Explicit isometric embedding from $\ell_2^p(\bC)$ into $\ell_4^{c p^2}(\bC)$}
\label{SecFullEmb}

As mentioned earlier,
deterministic linear embeddings from $\ell_2^n$ to $\ell_1^{c n^2}$ with constant distortion have been claimed to exist in different places without full details,
e.g. stating that a construction can be extracted from \cite{Rud}.
In the latter, a central role was played by Sidon sets,
which are the same as Golomb rulers.
In what follows, I~propose a Golomb-ruler argument which likely coincides with what should have been extracted from \cite{Rud}.
Taking a detour via the $\ell_4$-norm,
the argument actually uncovers a deterministic embedding from $\ell_2^p$ ($p$ being prime) to $\ell_4^{ c p^2}$ whose distortion is exactly equal to one.
This isometric embedding is singled out in this section.
The main ingredient,
due to \cite{ErdTur}, is reproduced here for completeness.

\begin{lemma}
\label{LemGolomb}
For a prime number $p \ge 3$, the integers
$$
g(k) = 2 p k + \big( k^2 \big)_p,
\qquad k \in [0:p-1],
$$
where $\big( k^2 \big)_p$ denotes the integer $t \in [0:p-1]$ such that $k^2 \equiv t \mod p$, form a Golomb ruler, in the sense that
$$
g(k) - g(k') \not= g(\ell) - g(\ell')
\qquad \mbox{whenever } (k \not= k') \not= (\ell \not= \ell').
$$
\end{lemma}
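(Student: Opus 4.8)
The plan is to exploit the two-scale structure hidden in the definition. Writing $r_k := (k^2)_p \in [0:p-1]$, one has $g(k) = 2pk + r_k$, where $2pk$ lives on a coarse scale (multiples of $2p$) while $r_k$ lives on a fine scale (an integer of magnitude at most $p-1$). Suppose $g(k) - g(k') = g(\ell) - g(\ell')$ with $k \neq k'$ and $\ell \neq \ell'$; the goal is to force $(k,k') = (\ell,\ell')$. First I would rearrange the hypothesis into
\[
2p\big[ (k-k') - (\ell - \ell') \big] = (r_\ell - r_{\ell'}) - (r_k - r_{k'}).
\]
The left-hand side is a multiple of $2p$, whereas the right-hand side is a difference of two residue differences, each lying in $[-(p-1), p-1]$, hence has absolute value at most $2(p-1) < 2p$. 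A multiple of $2p$ of absolute value strictly less than $2p$ must vanish, so both sides are zero. This separates the equation into an integer identity $k - k' = \ell - \ell'$ and a residue identity $r_k - r_{k'} = r_\ell - r_{\ell'}$.

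The second step is to push the residue identity through the congruence $r_k \equiv k^2 \pmod{p}$, turning it into $k^2 - k'^2 \equiv \ell^2 - \ell'^2 \pmod{p}$, that is $(k-k')(k+k') \equiv (\ell - \ell')(\ell + \ell') \pmod{p}$. The crucial observation is that $d := k - k'$ is a nonzero integer of absolute value at most $p-1$, so $d \not\equiv 0 \pmod{p}$ and is therefore invertible in $\bF_p$; the same holds for $\ell - \ell' = d$. Dividing the factored congruence by $d$ yields $k + k' \equiv \ell + \ell' \pmod{p}$. Combining this with the integer identity $k - k' = \ell - \ell'$, and using that $p \ge 3$ is odd so that $2$ is a unit modulo $p$, I would add and subtract to get $k \equiv \ell$ and $k' \equiv \ell' \pmod{p}$. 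Since all four indices lie in $[0:p-1]$, these congruences are genuine equalities, giving $(k,k') = (\ell,\ell')$.

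I do not expect a genuine obstacle here: the argument is short and elementary once the scale separation is in place. The one point deserving care—and really the heart of the construction—is why the multiplier is $2p$ rather than $p$. With $g(k) = pk + r_k$ the coarse term would only be a multiple of $p$, while the residue combination can reach $\pm(2p-2)$ in absolute value, so the vanishing step would break down (the multiple of $p$ could be $\pm p$). Doubling the spacing to $2p$ is exactly what prevents the fine-scale contribution from overflowing into the coarse scale, and it is the only place where the precise constant matters. Everything else rests solely on the primality and oddness of $p$: a nonzero difference in $[-(p-1),p-1]$ is a unit mod $p$, and $2$ is a unit.
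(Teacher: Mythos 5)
Your proof is correct and follows essentially the same route as the paper's: separate the coarse multiple-of-$2p$ part from the fine residue part to deduce $k-k'=\ell-\ell'$, then reduce the residue identity modulo $p$ and use that the common difference and $2$ are units in $\bF_p$ to force $(k,k')=(\ell,\ell')$. The paper phrases the second step via the expansion $(k+\sigma)^2=k^2+2\sigma k+\sigma^2$ rather than your difference-of-squares factorization, but the algebra is identical.
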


\begin{proof}
Let $(k \not= k') \not= (\ell \not= \ell')$
and suppose $g(k)-g(k') = g(\ell)-g(\ell')$.
Writing $k' = k+ \sigma$ and $\ell' = \ell + \tau$
for some $\sigma,\tau \in [-p+1:p-1] \setm \{0\}$,
this reads 
$2pk + \big( k^2 \big)_p - 2p (k+\sigma) - \big( (k+\sigma)^2 \big)_p
= 2p \ell  + \big( \ell^2 \big)_p - 2p (\ell+\tau) - \big( (\ell+\tau)^2 \big)_p$,
i.e.,
\begin{equation}
\label{Modulo}
\big( k^2 \big)_p - \big( (k+\sigma)^2 \big)_p
- \big( \ell^2 \big)_p + \big( (\ell+\tau)^2 \big)_p
= 2 p (\sigma - \tau).
\end{equation}
Since the left-hand side, in absolute value, is at most $2(p-1)<p$,
the right-hand side must be zero,
so that $\sigma = \tau$.
Taking $\big( (k+\sigma)^2 \big)_p \equiv \big( k^2 \big)_p + \big( 2 \sigma k \big)_p+ \big( \sigma^2 \big)_p \mod p$ into account,
as well as $\big( (\ell+\tau)^2 \big)_p = \big( (\ell+\sigma)^2 \big)_p \equiv \big( \ell^2 \big)_p + \big( 2 \sigma \ell \big)_p+ \big( \sigma^2 \big)_p \mod p$,
looking at \eqref{Modulo} modulo $p$ yields
$$
\big( 2 \sigma \ell \big)_p - \big( 2 \sigma k \big)_p \equiv 0 \mod p,
\qquad \mbox{i.e.,} \qquad 2 \sigma (\ell - k) \equiv  0 \mod p.
$$
But since $2 \not\equiv 0 $ and $\sigma \not\equiv 0 \mod p$,
one deduces that $\ell - k \equiv 0 \mod p$
and hence that $k=\ell$.
In view of $k'=k+\sigma$ and $\ell' = \ell + \tau$ with $\sigma = \tau$, it follows that $(k \not= k') = (\ell \not= \ell')$.
This leads to a contradiction,
showing that $g$ indeed generates a Golomb ruler. 
\end{proof}

The coveted isometric embedding from $\ell_2^p$ to $\ell_4^{c p^2}$ will be obtained by combining Lemma \ref{LemB} with Lemma \ref{LemGolomb}.
Before that, it is worth pausing to remark that $g$ maps $[0:p-1]$
into $[0:q-1]$, where $q=3p(p-1)+1$ is quadratic in $p$,
and that such a quadratic order is optimal.
Indeed, for any Golomb ruler $g$ from $[0:p-1]$
into $[0:q-1]$, all the distinct $g(k)-g(k')$ indexed by ordered pairs $(k \not= k')$ are contained in $[-q+1:q-1]$, 
and as such $2q-1 \ge p(p-1)$.

\begin{theorem}
\label{ThmIsoEmb}
For a prime number $p \ge 3$, let $m=6p^2-6p+1$.
Consider the matrix $A'' \in \bC^{m \times p}$ with entries
\be
\label{PreMat}
A''_{j,k} = \exp \left( i 2 \pi \f{j g(k)}{m} \right),
\quad j \in [0:m-1], \; k \in [0:p-1].
\ee
Then the matrix $M \in \bC^{(m+p) \times p}$ defined as
\be
\label{DefA''}
M := \bbmx \df{1}{(2m)^{1/4}} A'' \\ \hline \df{1}{2^{1/4}} I_p \ebmx
\ee
provides an isometric embedding from $\ell_2^p(\bC)$ into $\ell_2^{m+p}(\bC)$, i.e.,
$$
\|M x\|_4 = \|x\|_2
\qquad \mbox{for all } x \in \bC^p.
$$
\end{theorem}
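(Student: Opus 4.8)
The plan is to expand $\|Mx\|_4^4$ over the two blocks of $M$ and reduce everything to a single application of Lemma \ref{LemB} to the top block $A''$, all of whose entries have modulus one. Since the $\ell_4^4$-norm is additive over coordinates, I would first record the block decomposition
$$
\|Mx\|_4^4 = \frac{1}{2m}\|A''x\|_4^4 + \frac{1}{2}\|x\|_4^4 ,
$$
so that it remains only to prove $\|A''x\|_4^4 = 2m\|x\|_2^4 - m\|x\|_4^4$; substituting this into the display collapses the two $\|x\|_4^4$-terms and leaves exactly $\|Mx\|_4^4 = \|x\|_2^4$.

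To evaluate $\|A''x\|_4^4$ I would invoke identity \eqref{IdenL4bis} with $B = A''$ and $q = m$. Each bracketed coefficient is a geometric sum of the form $\sum_{j=0}^{m-1}\exp(i 2\pi j \Delta/m)$ for an integer combination $\Delta$ of the $g$-values, hence equals $m$ when $\Delta \equiv 0 \bmod m$ and vanishes otherwise. The decisive arithmetic fact, which I would establish first, is that $m = 2q-1$ where $q = 3p(p-1)+1$ is the span of the ruler; since $g$ takes values in $[0:q-1]$ (as noted after Lemma \ref{LemGolomb}), every difference $g(k)-g(k')$ lies in $[-(q-1):q-1]$, a complete residue system modulo $m$, so reduction modulo $m$ is injective on this range. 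This yields three payoffs. First, for $k \neq k'$ the coefficient $\sum_j \overline{A''_{j,k}}A''_{j,k'}$ vanishes because $g(k')-g(k)\not\equiv 0 \bmod m$, whence \eqref{IdenL2} gives the intermediate identity $\|A''x\|_2^2 = m\|x\|_2^2$. Second, for $k \neq k'$ the coefficient $\sum_j \overline{A''_{j,k}}^2 A''_{j,k'}^2$ vanishes because $m$ is odd, so $2(g(k')-g(k)) \equiv 0 \bmod m$ would force $g(k)=g(k')$. Third, and most importantly, each coefficient in $\Sigma_2$ is nonzero only when $g(k')-g(k) \equiv g(\ell')-g(\ell) \bmod m$, which by injectivity is an equality of integers, i.e. $g(k)-g(k') = g(\ell)-g(\ell')$; the Golomb-ruler property of Lemma \ref{LemGolomb} then forces $(k,k')=(\ell,\ell')$, a configuration explicitly barred from the index set of $\Sigma_2$. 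Hence $\Sigma_2 = 0$.

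Collecting these three vanishings, identity \eqref{IdenL4bis} reduces to
$$
\|A''x\|_4^4 = 2\|x\|_2^2\,\|A''x\|_2^2 - m\|x\|_4^4 = 2m\|x\|_2^4 - m\|x\|_4^4 ,
$$
which is precisely what the block decomposition requires, completing the proof.

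I expect the only genuine subtlety to be the passage from congruence to integer equality: the character sums detect the $g$-differences only modulo $m$, whereas the Golomb property is a statement about \emph{integer} differences. The identity $m = 2q-1$ is exactly what bridges this gap, guaranteeing that distinct differences within the ruler's span stay distinct modulo $m$. The remaining ingredients—checking that $g$ maps into $[0:q-1]$ and that $m$ is odd—are the small verifications that make the three vanishing claims rigorous.
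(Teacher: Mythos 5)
Your proof is correct and follows essentially the same route as the paper: the block decomposition of $\|Mx\|_4^4$, Lemma \ref{LemB} applied to $A''$, and the vanishing of the geometric character sums via the Golomb-ruler property combined with the range containment $[-2q+2:2q-2]=[-m+1:m-1]$. The only cosmetic difference is that the paper invokes \eqref{IdenL4} and kills all of $\Sigma_1$ in one stroke (the Golomb property already covers the pairs with $\ell=k'$, $\ell'=k$), whereas you route through \eqref{IdenL4bis} and treat the squared-coefficient term separately using the oddness of $m$; both work.
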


\begin{proof}
Let a vector $x \in \bC^p$ be fixed throughout the proof.
For any $k \not= k'$,
one observes that
$$
\sum_{j=0}^{m-1} \ol{A''_{j,k}} A''_{j,k'}
= \sum_{j=0}^{m-1} \exp \left( -i 2 \pi \f{j(g(k)-g(k'))}{m} \right) = 0,
$$
owing to $g(k)-g(k') \in [-q+1 : q-1] \inc [-m+1:m-1]$
being nonzero 
(otherwise, if $g(k)=g(k')$,
choosing $k'' \not\in \{k,k'\}$ would yield $g(k)-g(k'') = g(k')-g(k'')$).
According to Lemma \ref{LemB}, and specifically to \eqref{IdenL2}, one therefore has
$$
\|A'' x\|_2^2 = m \|x\|_2^2.
$$
Next, for any $(k \not= k') \not= (\ell \not= \ell')$,
one observes that
$$
\sum_{j=0}^{m-1} \ol{A''_{j,k}} A''_{j,k'} A''_{j,\ell} \ol{A''_{j,\ell'}}
=
\exp \left( -i 2 \pi \f{j\big( (g(k)-g(k')) - (g(\ell)-g(\ell')) \big)}{m} \right) =0,
$$
owing to $(g(k)-g(k')) - (g(\ell)-g(\ell')) \in [-2q+2:2q-2] = [-m+1:m-1]$ being nonzero.
According to Lemma \ref{LemB} again,
and specifically to \eqref{IdenL4}, one obtains 
$$
\|A'' x\|_4^4 = 2 \|x\|_2^2 \|A'' x\|_2^2 - m \|x\|_4^4 = 
2 m \|x\|_2^4 - m \|x\|_4^4.
$$
From here, it easily follows that
$$
\|M x\|_4^4 = \f{1}{2m} \|A'' x\|_4^4 + \f{1}{2} \|x\|_4^4
= \|x\|_2^4,
$$
which is the desired result.
\end{proof}

\begin{remark}
\label{RkEmb}
Isometric embeddings from $\ell_2$ to $\ell_4$ can alternatively be viewed through the lenses of spherical designs and of tensors.
In order not to be diverted from the main goal,
this connection will be brought forward much later,  in Section \ref{SecLinks}.
\end{remark}

\section{Explicit embedding of $s$-sparse vectors from $\ell_2^N(\bC)$ to $\ell_1^{c s^4}(\bC)$}
\label{SecEmbC}

Based on Theorem \ref{ThmIsoEmb} in the previous section, a linear embedding from $\ell_2^p(\bC)$ to $\ell_1^{c p^2}(\bC)$ with constant distortion can easily be generated.
However,
instead of using the matrix $M$ from \eqref{DefA''},
which provided an isometric embedding from $\ell_2^p(\bC)$ to $\ell_4^{c p^2}(\bC)$,
the matrix $A''$ from \eqref{PreMat} is preferred, as it leads to a nicer expression for the distortion.
Recall that the proof of Theorem \ref{ThmIsoEmb} revealed that,
for any $x \in \bC^p$,
$$
\|A''x\|_2 = \sqrt{m} \|x\|_2
\qquad \mbox{and} \qquad
\|A''x\|_4 \le (2 m )^{1/4} \|x\|_2.
$$

\begin{theorem}
\label{ThmEmbL1L2}
For a prime number $p \ge 3$, let $m = 6p^2-6p+1$.
The matrix $A'' \in \bC^{m \times p}$ defined in \eqref{PreMat} provides an embedding from $\ell_2^p(\bC)$ into $\ell_1^m(\bC)$ with distortion at most~$\sqrt{2}$. Precisely, one has 
$$
\f{m}{\sqrt{2}} \|x\|_2 \le \|A'' x\|_1 \le m \, \|x\|_2
\qquad 
\mbox{for all } x \in \bC^p.
$$
\end{theorem}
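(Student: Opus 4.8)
The plan is to read off both inequalities directly from the two norm estimates already harvested in the proof of Theorem~\ref{ThmIsoEmb}, namely
$$
\|A'' x\|_2 = \sqrt{m}\,\|x\|_2
\qquad \text{and} \qquad
\|A'' x\|_4 \le (2m)^{1/4}\,\|x\|_2
\qquad \text{for all } x \in \bC^p.
$$
No further structural work on the Golomb ruler is needed: the orthogonality relations it guarantees have already done their job in pinning down the $\ell_2$- and $\ell_4$-behaviour of $A''$. The whole proof amounts to feeding these two facts into elementary norm comparisons.

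For the upper bound I would invoke the standard inequality $\|y\|_1 \le \sqrt{m}\,\|y\|_2$, valid for any $y \in \bC^m$ (Cauchy--Schwarz applied to $|y|$ against the all-ones vector). Applied to $y = A''x$, this gives
$$
\|A'' x\|_1 \le \sqrt{m}\,\|A'' x\|_2 = \sqrt{m}\cdot \sqrt{m}\,\|x\|_2 = m\,\|x\|_2,
$$
which is exactly the claimed right-hand estimate with $\beta = m$.

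For the lower bound I would appeal to Lemma~\ref{LemL1L4}, which furnishes $\|y\|_1 \ge \|y\|_2^3/\|y\|_4^2$. Taking $y = A''x$ and substituting the two estimates above yields
$$
\|A'' x\|_1 \ge \f{\|A'' x\|_2^3}{\|A'' x\|_4^2}
\ge \f{\big(\sqrt{m}\,\|x\|_2\big)^3}{\big((2m)^{1/4}\,\|x\|_2\big)^2}
= \f{m^{3/2}\,\|x\|_2^3}{\sqrt{2m}\,\|x\|_2^2}
= \f{m}{\sqrt{2}}\,\|x\|_2,
$$
so that $\alpha = m/\sqrt{2}$. Combining the two bounds, the distortion is $\gamma = \beta/\alpha = \sqrt{2}$, as announced.

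There is really no obstacle here; the theorem is a short corollary of Theorem~\ref{ThmIsoEmb} and Lemma~\ref{LemL1L4}. The only point deserving a moment's care is bookkeeping the powers of $m$ in the lower-bound computation (the cube of $\sqrt{m}$ against the square of $(2m)^{1/4}$), but this is a routine exponent check rather than a genuine difficulty. The substantive mathematics—the Golomb-ruler construction and the resulting cancellations—was already expended in establishing the $\ell_2$- and $\ell_4$-identities in the preceding section.
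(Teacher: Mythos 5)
Your proposal is correct and follows exactly the same route as the paper's own proof: the upper bound via $\|y\|_1 \le \sqrt{m}\,\|y\|_2$ combined with $\|A''x\|_2 = \sqrt{m}\,\|x\|_2$, and the lower bound via Lemma~\ref{LemL1L4} together with $\|A''x\|_4 \le (2m)^{1/4}\|x\|_2$. The exponent bookkeeping is also handled correctly, so there is nothing to add.
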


\begin{proof}
Let a vector $x \in \bC^p$ be fixed throughout this short proof.
On the one hand, comparing $\ell_1$- and $\ell_2$-norms yields
$$
\|A''x\|_1 \le \sqrt{m} \|A''x\|_2 = m \|x\|_2.
$$
On the other hand, according to Lemma \ref{LemL1L4},
one has
$$
\|A''x\|_1 \ge \f{\|A''x\|_2^3}{\|A'' x\|_4^2} \ge \f{\sqrt{m}^3 \|x\|_2^3}{\sqrt{2m} \|x\|_2^2}
= \f{m}{\sqrt{2}} \|x\|_2.
$$
These two inequalities together justify the announced embedding.
\end{proof}

It is now time for the main result of this section,
namely the awaited explicit linear embedding of $s$-sparse vectors from $\ell_2^N(\bC)$ to $\ell_1^{c s^4}(\bC)$.
It repeats the general strategy (i)-(ii) outlined in Section \ref{SecDetK}.

\begin{theorem}
Given integers $N, s \ge 1$ with $N \gg s^4$, 
let $p \ge 3$ be a prime number between $9s^2 \big\lceil \ln^2(N) \big\rceil$ and $18 s^2 \big\lceil \ln^2(N) \big\rceil$
and let $m = 6 p^2 - 6p + 1 \asymp s^4 \ln^2(N)$.
Then the matrix $A \in \bC^{m \times N}$
indexed by $[0:m-1]$ and by an arbitrary $N$-set $\cF$ of polynomials over $\bF_p$ of degree at most $d = \lceil \ln(N/p)/\ln(p) \rceil$ and with entries 
\begin{equation}
\label{ExpAjf}
A_{k,f} = \f{1}{\sqrt{p}} \hspace{-1mm}
\sum_{k=0}^{p-1} \exp\left( i 2 \pi \left( j \f{2pk + (k^2)_p}{m} + \f{k f(k)}{p} \right) \right),
\;  k \in [0\colon \hspace{-1mm} m-1], \, f \in \cF,
\end{equation}
provides an explicit embedding from $\ell_2^N(\bC)$ into $\ell_1^{m}(\bC)$ with distortion on $s$-sparse vectors at most $2$,
namely
\be
\label{MainRIP12}
\f{m}{\sqrt{3}} \|x\|_2 \le \|A x\|_1 \le \f{2 m}{\sqrt{3} } \|x\|_2
\qquad \mbox{for all $s$-sparse }x \in \bC^{N}.
\ee
\end{theorem}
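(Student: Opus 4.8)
The matrix $A$ is exactly the composition $A = A''A'$ from the two-step recipe (i)–(ii): here $A' \in \bC^{p\times N}$ is the Weil-bound matrix of Section \ref{SecDetK}, with columns indexed by the polynomials of $\cF$ and entries $A'_{k,f} = p^{-1/2}\exp\!\big(i2\pi\,kf(k)/p\big)$, while $A'' \in \bC^{m\times p}$ is the Golomb-ruler matrix \eqref{PreMat}. Carrying out the product $A''A'$ reproduces the entries \eqref{ExpAjf} verbatim, so the plan is to feed $y = A'x$ into the $\ell_2\to\ell_1$ estimate already proved for $A''$ and to control $\|A'x\|_2$ through the coherence of $A'$. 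Before anything else I would record that a prime $p$ in the prescribed range exists by Bertrand's postulate, and that the choice $d=\lceil \ln(N/p)/\ln p\rceil$ guarantees $p^{d+1}\ge N$, which is what leaves room for an $N$-set $\cF$ of polynomials of degree at most $d$.

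For the first factor, I would invoke the Weil bound recalled in Section \ref{SecDetK}: the matrix $A'$ has unit-norm columns and coherence $\mu(A') \le d/\sqrt p$, so the inequality $\delta_s(A') < s\,\mu(A')$ yields the standard RIP
\[
(1-\delta)\,\|x\|_2^2 \le \|A'x\|_2^2 \le (1+\delta)\,\|x\|_2^2,\qquad \delta:=\delta_s(A') < \frac{sd}{\sqrt p},
\]
for all $s$-sparse $x\in\bC^N$. The heart of the matter is then a purely arithmetic verification that $\delta \le 1/3$: from $p \ge 9s^2\lceil\ln^2 N\rceil$ one gets $sd/\sqrt p \le d/(3\ln N)$, and from $d \le \ln N/\ln p + 1$ together with $\ln p \gtrsim \ln(\ln^2 N)$ one gets $d \le \ln N$, whence $sd/\sqrt p \le 1/3$. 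I expect this bookkeeping — simultaneously confirming $d \le \ln N$ and, incidentally, $d < p$ so that the Weil bound is applicable — to be the only delicate point, even though it is entirely elementary.

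For the second factor, I would apply Theorem \ref{ThmEmbL1L2} to the vector $y = A'x \in \bC^p$, giving $\tfrac{m}{\sqrt 2}\|A'x\|_2 \le \|Ax\|_1 \le m\,\|A'x\|_2$. Inserting the two-sided bound on $\|A'x\|_2$ and then $\delta \le 1/3$ produces
\[
\|Ax\|_1 \ge \frac{m}{\sqrt2}\sqrt{1-\delta}\,\|x\|_2 \ge \frac{m}{\sqrt2}\sqrt{\tfrac{2}{3}}\,\|x\|_2 = \frac{m}{\sqrt3}\,\|x\|_2
\]
and
\[
\|Ax\|_1 \le m\sqrt{1+\delta}\,\|x\|_2 \le m\sqrt{\tfrac{4}{3}}\,\|x\|_2 = \frac{2m}{\sqrt3}\,\|x\|_2,
\]
which is precisely \eqref{MainRIP12}, with distortion $2$. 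Explicitness follows at once, since both $A'$ and $A''$ are prescribed by closed-form formulas.
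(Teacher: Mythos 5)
Your proposal is correct and follows the paper's own argument essentially verbatim: decompose $A = A'' A'$ into the Weil matrix (coherence $\le d/\sqrt p$, hence $\delta_s(A') < 1/3$ via the arithmetic on $p$ and $d$) and the Golomb matrix (Theorem \ref{ThmEmbL1L2}), then chain the two estimates. The only cosmetic difference is that you define $A'$ directly on the $N$ columns indexed by $\cF$ rather than forming the full $p \times p^{d+1}$ matrix and deleting columns afterward, and your route to $d \le \ln N$ (via $d \le \ln N/\ln p + 1$) is marginally less clean than the paper's $d \le d\ln p < \ln N$, but both are sound.
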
 

\begin{proof}
Picking a prime number $p$ between $9s^2 \big\lceil \ln^2(N) \big\rceil$ and $18 s^2 \big\lceil \ln^2(N) \big\rceil$ is possible by Bertrand postulate.
One considers the smallest integer $d$ such that $p^{d+1} \ge N$, i.e., $d = \lceil \ln(N/p)/\ln(p) \rceil$.
Note that $1 < p < d$ since 
$p^2 \lesssim s^4 {\rm polylog}(N) < N$
and $p^p \ge e^p \ge e^{\ln(N)} = N$.
From $p^d < N$, one also deduces that $d \le d \ln(p) < \ln(N)$.
Let then $A' \in \bC^{p \times p^{d+1}}$ be the `Weil' matrix with entries defined in \eqref{EntriesWeil},  recalling that
$\delta_s(A') < s \mu(A) < s d/\sqrt{p} < s \ln(N)/(3 s \ln(N)) = 1/3$,
so that
$$
\|A' x\|_2^2
\left\{
\bmx
\le (1+\delta_s(A')) \|x\|_2^2 \le \df{4}{3} \|x\|_2^2 \\
\\
\ge (1-\delta_s(A')) \|x\|_2^2 \ge \df{2}{3} \|x\|_2^2
\emx
\right.
\qquad \mbox{for all $s$-sparse } x \in \bC^{p^{d+1}}.
$$
Let also $A'' \in \bC^{m \times p}$
be the `Golomb' matrix with entries defined in \eqref{PreMat},
recalling that
$$
\|A'' x\|_1 
\left\{
\bmx
\le m \|x\|_2 \\ 
\\
\ge \df{m}{\sqrt{2}} \|x\|_2
\emx
\right.
\qquad \mbox{for all } x \in \bC^{p}.
$$
As a result, for the matrix $\wt{A} := A'' \times A' \in \bC^{m \times p^{d+1}}$ and for any $s$-sparse $x \in \bC^{p^{d+1}}$,
$$
\|\wt{A} x\|_1 = \| A''(A' x)\|_1
\left\{
\bmx
\le m \|A'x\|_2 \le m \sqrt{\df{4}{3}} \|x\|_2 = \df{2m}{\sqrt{3}} \|x\|_2,\quad \;\\
\ge \df{m}{\sqrt{2}} \|A'x\|_2 \ge \df{m}{\sqrt{2}} \sqrt{\df{2}{3}} \|x\|_2
= \df{m}{\sqrt{3}} \|x\|_2.
\emx
\right.
$$
This strongly resembles the desired inequalities \eqref{MainRIP12},
expect that the matrix $\wt{A}$ has $p^{d+1}$ columns,
while the matrix $A$ should have $N$ columns.
But one can simply remove $p^{d+1}-N$ arbitrary columns from $\wt{A} \in \bC^{m \times p^{d+1}}$ to create a matrix $ A \in \bC^{m \times N}$ satisfying \eqref{MainRIP12}.
The entries of this matrix, indexed by $j$  in $[0:m-1]$ and by $f$ in a set $\cF$ of polynomials over $\bF_p$ of degree at most $d$ with size $N$, are given by
$$
A_{j,f} = \sum_{k=0}^{p-1} A''_{j,k} A'_{k,f}
= \sum_{k=0}^{p-1} \exp \left( i 2 \pi \f{j g(k)}{m} \right) \f{1}{\sqrt{p}} \exp \left( i 2 \pi \f{k f(k)}{p} \right),
$$
which reduces to the expression announced in \eqref{ExpAjf}.
\end{proof}

\section{Outlook into low-rank recovery}
\label{SecLowRk}

The theory of compressive sensing also deals with objects of 
nominally high but intrinsically low dimension beyond $s$-sparse vectors $x \in \bK^N$,
prototypically with matrices $X \in \bK^{n \times n}$ of rank at most $r$.
In this scenario, too,
one can recover such objects from their compressive measurements $\cA(X) \in \bK^m$ with $m$ being of the order of $n r \ll n^2$.
This coup can be achieved (see \cite{RFP}) by nuclear norm minimization  when the linear map $\cA: \bK^{n \times n} \to \bK^m$ satisfies an RIP of the form
\be
\label{RRIP}
(1-\delta) \|X\|_F^2 \le \|\cA(X)\|_2^2 \le (1+\delta) \|X\|_F^2
\qquad \mbox{for all rank-$r$ } X \in \bK^{n \times n}. 
\ee
For $m \asymp n r$, 
this RIP is fulfilled when $A_1,\ldots,A_m \in \bK^{n \times n}$ in $\cA(Z)_i = \langle A_i, X \rangle_F$ are independent  random matrices populated with independent properly normalized gaussian entries, see \cite{CanPla}.
But, as in the vector case, no deterministic linear map $\cA: \bK^{n \times n} \to \bK^m$ is known in this optimal regime $m \asymp n r$.
Worst, I am not aware of simple deterministic constructions overcoming the trivial regime $m \asymp n^2$
and I~have not seen an effective analog of the notion of coherence. 

Interestingly, the theory of low-rank recovery can also be built from a modification  of the RIP featuring the $\ell_1$-norm as the inner norm, see \cite[Chapter 16]{BookDS}, namely from
\be
\label{MRRIP}
(1-\delta) \|X\|_F \le \|\cA(X)\|_1 \le (1+\delta) \|X\|_F
\qquad \mbox{for all rank-$r$ } X \in \bK^{n \times n}. 
\ee
This alternative version seems even more relevant in the present scenario.
Indeed, when $m \asymp n r$, the rank-one measurements
$$
\cA(Z)_i = \langle b_i, Z a_i \rangle = \langle A_i, Z \rangle_F,
\qquad A_i := b_i a_i^*,
$$
with independent properly normalized gaussian vectors $a_1,\ldots,a_m,b_1,\ldots,b_m \in \bK^n$ do not lead to the RIP \eqref{RRIP} but to the RIP \eqref{MRRIP}, while the latter still enables low-rank recovery via nuclear norm minimization, see \cite{CaiZha}.
It also enables low-rank recovery via iterative-thresholding-type algorithms, see \cite{FouSub}.
But the possibility of fulfilling this modified RIP in a regime $m \asymp n^\la r^\mu \ll n^2$
with deterministic matrices $A_1,\ldots,A_m \in \bK^{n \times n}$---of rank one or even unrestricted---is a wide open question.
This question is hereby set as a challenge to the readers.

\section{Addendum: isometric embeddings, spherical designs, tensors}
\label{SecLinks}

As mentioned in Remark \ref{RkEmb}, isometric embedding from $\ell_2^N(\bK)$ to $\ell_{2k}(\bK)$, $\bK \in \{\bR,\bC\}$,
have connections with spherical designs and tensors.
This is made precise by the following result, found in \cite{Sei} for the case $\bK = \bR$.

\begin{theorem}
\label{ThmSD}
Let $2k \ge 2$ be an even integer.
The following properties are equivalent:
\begin{itemize}
\item[1)]
There exists $A \in \bK^{N \times n}$ providing an isometric embedding from $\ell_2^n$ into $\ell_{2k}^N$, i.e., 
$$
 \|Ax\|_{2k} = \|x\|_2
\qquad \mbox{for all } x \in \bK^n;
$$
\item[2)]
There exist $x_1,\ldots,x_N \in S_2^n$ 
and $\tau_1,\ldots,\tau_N \ge 0$ with $\tau_1 + \cdots + \tau_N = 1$ such that
$$
\sum_{i,j=1}^N \tau_i \tau_j |\langle x_i , x_j \rangle|^{2k} = \int_{S_2^n \times S_2^n} |\langle x,y \rangle|^{2k} d \sigma(x) d \sigma(y),
$$
where $\sigma$ is the normalized standard measure on the unit sphere $S_2^n$ of $\ell_2^n$;
\item[3)] 
There exist $x_1,\ldots,x_N \in S_2^n$ 
and $\tau_1,\ldots,\tau_N \ge 0$ with $\tau_1 + \cdots + \tau_N = 1$ such that
$$
\sum_{i=1}^N \tau_i \otimes^k (x_i \otimes \ol{x_i}) = 
\int_{S_2^n} \otimes^k ( x \otimes \ol{x} ) d\sigma(x).
$$
\end{itemize}
\end{theorem}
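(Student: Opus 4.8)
The plan is to linearise the quantity $|\langle u, v\rangle|^{2k}$ by a tensor feature map and to read all three conditions inside a single finite-dimensional Hilbert space $\cH = (\bK^n \otimes \bK^n)^{\otimes k}$. I would introduce $\Phi \colon S_2^n \to \cH$ by $\Phi(u) = \otimes^k(u \otimes \ol{u})$ (with $\ol{u} = u$ when $\bK = \bR$) and record the fundamental identity $\langle \Phi(u), \Phi(v)\rangle = |\langle u, v\rangle|^{2k}$, which holds because $\langle u \otimes \ol{u}, v \otimes \ol{v}\rangle = |\langle u, v\rangle|^2$ and tensor powers multiply inner products. Writing $c := \int_{S_2^n} \Phi(x)\, d\sigma(x) \in \cH$ for the barycenter of $\Phi$ under $\sigma$, two facts drive everything: first, by rotational invariance of $\sigma$ the value $\langle \Phi(u), c\rangle = \int_{S_2^n} |\langle u, y\rangle|^{2k}\, d\sigma(y)$ is independent of the unit vector $u$, call it $\kappa_{n,k}$; and second, integrating once more gives $\|c\|^2 = \kappa_{n,k}$. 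Thus the common right-hand side in 2) equals $\|c\|^2$, and every point $\Phi(u)$ has the same inner product $\|c\|^2$ with $c$.

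With this in place I would deduce 2)$\Leftrightarrow$3) from the equality case of Cauchy--Schwarz. Setting $v := \sum_{i=1}^N \tau_i \Phi(x_i)$, the constraint $\sum_i \tau_i = 1$ gives $\langle v, c\rangle = \sum_i \tau_i \kappa_{n,k} = \|c\|^2$ \emph{unconditionally}. Condition 2) says precisely $\|v\|^2 = \|c\|^2$, so together with $\langle v, c\rangle = \|c\|^2$ the inequality $\|c\|^2 = \langle v, c\rangle \le \|v\|\,\|c\|$ is forced to be an equality; hence $v$ is a nonnegative multiple of $c$, and $\langle v, c\rangle = \|c\|^2$ pins the multiple to one, i.e. $v = c$, which is 3). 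The reverse implication is immediate upon taking squared norms.

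For 1)$\Leftrightarrow$3) I would translate the embedding into the same language: writing the rows of $A$ as vectors $a_i$, I set $x_i := a_i/\|a_i\|_2$ and $w_i := \|a_i\|_2^{2k}$ (with arbitrary unit $x_i$ when $a_i = 0$). By homogeneity the isometry $\|Ax\|_{2k} = \|x\|_2$ is equivalent to $\sum_i w_i |\langle x_i, x\rangle|^{2k} = 1$ for all $x \in S_2^n$, that is, to $\langle \sum_i w_i \Phi(x_i), \Phi(x)\rangle = 1$ for every unit $x$. Since $c/\kappa_{n,k}$ satisfies the same relation, the difference $\sum_i w_i \Phi(x_i) - c/\kappa_{n,k}$ is orthogonal to every $\Phi(x)$; as both tensors lie in $V := \spa\{\Phi(x) : x \in S_2^n\}$ (which contains $c$, an average of its own members), the difference lies in $V \cap V^\perp = \{0\}$, giving $\sum_i w_i \Phi(x_i) = c/\kappa_{n,k}$. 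Rescaling by $\tau_i := \kappa_{n,k} w_i \ge 0$ (which then satisfy $\sum_i \tau_i = 1$, obtained by pairing the identity with $c$) turns this into $\sum_i \tau_i \Phi(x_i) = c$, i.e. 3); conversely, from 3) I reverse the rescaling and take the $i$-th row of $A$ to be $w_i^{1/(2k)} x_i$, the constant-$1$ relation then yielding the isometry on $S_2^n$ and, by homogeneity, everywhere.

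The conceptual heart---and the step I expect to require the most care---is the pair of normalisation facts $\langle \Phi(u), c\rangle = \kappa_{n,k}$ and $\|c\|^2 = \kappa_{n,k}$, together with the bookkeeping that reconciles the unconstrained weights $w_i$ of 1) (which sum to $1/\kappa_{n,k}$) with the probability weights $\tau_i$ of 2)--3). Once $\Phi$ and the identity $\langle \Phi(u), \Phi(v)\rangle = |\langle u, v\rangle|^{2k}$ are isolated, the three conditions collapse to statements about a single vector $v$ and the fixed barycenter $c$, and both equivalences reduce to elementary Hilbert-space geometry: orthogonality for 1)$\Leftrightarrow$3), and the Cauchy--Schwarz equality case for 2)$\Leftrightarrow$3). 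The only genuinely analytic input is the rotational invariance of $\sigma$, used to see that $\langle \Phi(u), c\rangle$ does not depend on $u$.
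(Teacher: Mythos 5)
Your proposal is correct and rests on exactly the machinery of the paper's proof: the feature map $x \mapsto \otimes^k(x \otimes \ol{x})$, the reproducing identity $\langle \otimes^k(u \otimes \ol{u}), \otimes^k(v \otimes \ol{v})\rangle = |\langle u,v\rangle|^{2k}$, and the two moment identities $\langle \Phi(u), c\rangle = \|c\|^2 = \kappa_{n,k}$ (the paper's $\delta$ and $D$), from which the equivalence 2)$\Leftrightarrow$3) follows by the same norm expansion, and 3)$\Rightarrow$1) by the same rescaling $a_i = (\tau_i/\kappa_{n,k})^{1/(2k)}x_i$. The one point of divergence is how you close the cycle: you prove 1)$\Rightarrow$3) directly by observing that $\sum_i w_i\Phi(x_i) - c/\kappa_{n,k}$ lies in $V \cap V^{\perp} = \{0\}$ with $V = \spa\{\Phi(x): x \in S_2^n\}$, whereas the paper proves 1)$\Rightarrow$2) by integrating the isometry relation over the sphere and normalizing the weights; your argument is slightly cleaner in the exact case, but it relies on exactness and would not carry over to the almost-isometric generalization that the paper actually establishes (where the paper's integration argument degrades gracefully to $\eps_2 = 4\eps_1\delta$).
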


Since RIPs are not genuine isometric embeddings, but almost isometric ones,
this result will be established in a slightly stronger form for the sake of completeness.
Towards this end, some pieces of notations and some identities are brought forth as a preamble.
First, one considers the quantity $\delta$
(independent of $x \in S_2^n$) defined by 
$$
\delta = \delta_{n,2k} := \int_{S_2^n} | \langle x, y \rangle |^{2k} d \sigma(y)
= \left\{ 
\bmx \df{(2k-1)\cdots 3 \cdot 1}{(n + 2k-2) \cdots (n+2) \cdot n} & \; \mbox{for }\bK = \bR,\\
\df{k!}{(n+k-1) \cdots (n+1) \cdot n} & \; \mbox{for } \bK = \bC,
\emx
\right.
$$
whose numerical value is given in \cite{Sei} for $\bK=\bR$
and in \cite{KotPev} for $\bK = \bC$.
Note that $\delta$ coincides with the double integral appearing in 2).
As for the integral appearing in~3),
called distribution $2k$-tensor (when $\bK = \bR$),
it shall be denoted by~$D$.
Thus,
$$
D = D_{n,2k} := \int_{S_2^n} \otimes^k ( y \otimes \ol{y}) d\sigma(y).
$$
Using the standard notion of inner product on tensor spaces,
one verifies below that, for any $x \in S_2^n$,
\be
\label{TensorId12}
\langle \otimes^k(x \otimes \ol{x}), D \rangle  = \delta
\qquad \quad \mbox{and} \quad \qquad
\langle D,D \rangle = \delta.
\ee
To justify the leftmost identity of \eqref{TensorId12}, it suffices to write
\begin{align*}
\langle \otimes^k(x \otimes \ol{x}), D \rangle
& = \bigg\langle \otimes^k(x \otimes \ol{x}),
\int_{S_2^n} \otimes^k ( y \otimes \ol{y}) d\sigma(y) \bigg\rangle \\
& = \int_{S_2^n} \left\langle \otimes^k(x \otimes \ol{x}), \otimes^k ( y \otimes \ol{y})  \right\rangle
d\sigma(y)\\
& = \int_{S_2^n} \left\langle x \otimes \ol{x},  y \otimes \ol{y}  \right\rangle^k d\sigma(y)
= \int_{S_2^n} \big( | \langle x ,  y  \rangle |^2 \big) ^k d\sigma(y)\\
& = \delta.
\end{align*}
To justify the rightmost identity of \eqref{TensorId12}, it suffices to write
\begin{align*}
\langle D,D \rangle
& = \bigg\langle \int_{S_2^n} \otimes^k ( x \otimes \ol{x}) d\sigma(x) , D \bigg\rangle
= \int_{S_2^n} \left\langle  \otimes^k ( x \otimes \ol{x}) , D \right\rangle d\sigma(x) = \int_{S_2^n} \delta d\sigma(x)\\ 
& = \delta.
\end{align*}
From here, an important identity follows easily, namely:
for any $x_1,\ldots,x_N \in S_2^n$ 
and any $\tau_1,\ldots,\tau_N \ge 0$ with $\tau_1 + \cdots + \tau_N = 1$,
\be
\label{TensorId3}
\bigg\| \sum_{i=1}^N \tau_i \otimes^k( x_i \otimes \ol{x_i}) - D \bigg\|_2^2
= \sum_{i,j=1}^N \tau_i \tau_j |\langle x_i, x_j \rangle|^{2k} - \delta.
\ee
Indeed, the justification of \eqref{TensorId3}
simply reads
\begin{align*}
\bigg\| & \sum_{i=1}^N  \tau_i  \otimes^k( x_i \otimes \ol{x_i}) - D \bigg\|_2^2\\
& = \sum_{i,j=1}^N \tau_i \tau_j \langle \otimes^k( x_i \otimes \ol{x_i}), \otimes^k( x_j \otimes \ol{x_j}) \rangle  -2 \Re \bigg\langle \sum_{i=1}^N \tau_i \otimes^k( x_i \otimes \ol{x_i}), D \bigg\rangle
+ \langle D, D \rangle\\
& = \sum_{i,j=1}^N \tau_i \tau_j \langle  x_i \otimes \ol{x_i},  x_j \otimes \ol{x_j} \rangle^k
- 2 \sum_{i=1}^N \tau_i \Re \left\langle \otimes^k( x_i \otimes \ol{x_i}), D \right\rangle + \langle D, D \rangle\\
& = \sum_{i,j=1}^N \tau_i \tau_j \big( | \langle  x_i,  x_j  \rangle |^2 \big)^k
- 2 \sum_{i=1}^N \tau_i \Re (\delta) + \delta,
\end{align*}
where both identities of \eqref{TensorId12} were used in the last step. 
Taking $\sum_{i=1}^N \tau_i = 1$ into account then leads to the desired identity \eqref{TensorId3}.
Note that it implies the so-called Sidelnikov inequality,
namely 
$$
\sum_{i,j=1}^N \tau_i \tau_j |\langle x_i, x_j \rangle|^{2k}
\ge \delta
\quad \mbox{whenever }
x_1,\ldots,x_N \in S_2^n, 
\; 
\tau_1,\ldots,\tau_N \ge 0, 
\; 
\sum_{i=1}^N \tau_i = 1.
$$

After this preparatory work, 
one can now state and prove the slight generalization of Theorem \ref{ThmSD},
which is retrieved as the special case $\eps_1=\eps_2=\eps_3 = 0$.

\begin{theorem}
Let $2k \ge 2$ be an even integer.
For $\eps_1,\eps_2,\eps_3 \ge 0$ that depend on each other,
the following properties are equivalent:
\begin{itemize}
\item[1')]\,
 There exists a matrix $A \in \bK^{N \times n}$ for which
$$
(1-\eps_1) \|x\|_2^{2k} \le \|Ax\|_{2k}^{2k} \le (1+\eps_1) \|x\|_2^{2k}
\qquad \mbox{for all } x \in \bK^n;
$$
\item[2')]\,
 There exist $x_1,\ldots,x_N \in S_2^n$ 
and $\tau_1,\ldots,\tau_N \ge 0$ with $\tau_1 + \cdots + \tau_N = 1$ such that
$$
\sum_{i,j=1}^N \tau_i \tau_j |\langle x_i , x_j \rangle|^{2k} \le \delta + \eps_2;
$$
\item[3')]\,
 There exist $x_1,\ldots,x_N \in S_2^n$ 
and $\tau_1,\ldots,\tau_N \ge 0$ with $\tau_1 + \cdots + \tau_N = 1$ such that
$$
\bigg\| \sum_{i=1}^N \tau_i \otimes^k (x_i \otimes \ol{x_i}) - D \bigg\|_2 \le \eps_3.
$$
\end{itemize}
\end{theorem}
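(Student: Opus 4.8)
The plan is to traverse the cycle $1')\Rightarrow 2')\Rightarrow 3')\Rightarrow 1')$, with the middle link $2')\Leftrightarrow 3')$ being essentially free from the identity \eqref{TensorId3}. The conceptual bridge between the matrix picture and the point-configuration picture is the following dictionary. Writing the $i$th row of $A\in\bK^{N\times n}$ as $c_i\,x_i^*$ with $c_i\ge 0$ and $x_i\in S_2^n$ (choosing $x_i$ arbitrarily when $c_i=0$), one has $\|Ax\|_{2k}^{2k}=\sum_{i=1}^N c_i^{2k}|\langle x_i,x\rangle|^{2k}$ for every $x$. Setting $\tau_i:=\delta\,c_i^{2k}$ and $T:=\sum_{i=1}^N\tau_i\otimes^k(x_i\otimes\ol{x_i})$, the leftmost identity in \eqref{TensorId12} upgrades to the pointwise relation $\langle T,\otimes^k(x\otimes\ol{x})\rangle=\delta\,\|Ax\|_{2k}^{2k}$ for all $x\in S_2^n$; conversely, any weighted configuration yields a matrix through $c_i=(\tau_i/\delta)^{1/(2k)}$. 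Since both sides of $1')$ are positively homogeneous of degree $2k$ in $x$, throughout one may restrict attention to $x\in S_2^n$.

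For $1')\Rightarrow 2')$ I would first record the single-integral identity $\Sigma:=\sum_i\tau_i=\int_{S_2^n}\|Ax\|_{2k}^{2k}\,d\sigma(x)$, obtained by integrating the dictionary relation against $d\sigma$ and using $\int_{S_2^n}|\langle x_i,x\rangle|^{2k}\,d\sigma(x)=\delta$. Under $1')$ this forces $\Sigma\in[1-\eps_1,1+\eps_1]$, so the renormalized weights $\hat\tau_i:=\tau_i/\Sigma$ sum to one and the associated tensor is $\hat T=T/\Sigma$. Then the frame-potential quantity is controlled by
\[
\sum_{i,j}\hat\tau_i\hat\tau_j|\langle x_i,x_j\rangle|^{2k}=\langle\hat T,\hat T\rangle=\frac{1}{\Sigma^2}\sum_j\tau_j\,\delta\,\|Ax_j\|_{2k}^{2k}\le\frac{\delta(1+\eps_1)}{\Sigma}\le\frac{\delta(1+\eps_1)}{1-\eps_1},
\]
where the middle equality is the dictionary relation summed against $\tau_j$ and the first inequality uses $x_j\in S_2^n$. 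This is exactly $2')$ with $\eps_2=2\delta\eps_1/(1-\eps_1)$.

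The step $2')\Leftrightarrow 3')$ is immediate from \eqref{TensorId3}: with weights summing to one,
\[
\Bigl\|\sum_i\tau_i\otimes^k(x_i\otimes\ol{x_i})-D\Bigr\|_2^2=\sum_{i,j}\tau_i\tau_j|\langle x_i,x_j\rangle|^{2k}-\delta,
\]
so the bound $\le\delta+\eps_2$ in $2')$ is identical to the bound $\le\sqrt{\eps_2}$ in $3')$, giving $\eps_3=\sqrt{\eps_2}$. For $3')\Rightarrow 1')$ I would build $A$ from the configuration via the dictionary and estimate, for $x\in S_2^n$,
\[
\bigl|\,\|Ax\|_{2k}^{2k}-1\,\bigr|=\frac{1}{\delta}\bigl|\langle T-D,\otimes^k(x\otimes\ol{x})\rangle\bigr|\le\frac{1}{\delta}\,\|T-D\|_2\,\bigl\|\otimes^k(x\otimes\ol{x})\bigr\|_2\le\frac{\eps_3}{\delta},
\]
using $\langle D,\otimes^k(x\otimes\ol{x})\rangle=\delta$ from \eqref{TensorId12}, Cauchy--Schwarz, and $\|\otimes^k(x\otimes\ol{x})\|_2=\|x\|_2^{2k}=1$. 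By homogeneity this yields $1')$ with $\eps_1=\eps_3/\delta$, closing the cycle.

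The only genuinely delicate point is the normalization: in the non-isometric regime the weights $\tau_i=\delta c_i^{2k}$ read off from $A$ need not sum to one, and one must pass to $\hat\tau_i=\tau_i/\Sigma$, controlling $\Sigma$ through the single-integral identity above. Once this is in place, the argument is bookkeeping resting on the two pre-established identities \eqref{TensorId12} and \eqref{TensorId3}; the mutual control of the error parameters is the chain $\eps_2\le 2\delta\eps_1/(1-\eps_1)$, $\eps_3=\sqrt{\eps_2}$, $\eps_1\le\eps_3/\delta$, all vanishing together and all collapsing to equality at $0$, which recovers Theorem \ref{ThmSD}.
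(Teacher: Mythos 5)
Your proposal is correct and follows essentially the same route as the paper: the equivalence $2')\Leftrightarrow 3')$ via \eqref{TensorId3}, the implication $3')\Rightarrow 1')$ via Cauchy--Schwarz with $a_i=(\tau_i/\delta)^{1/(2k)}x_i$, and the implication $1')\Rightarrow 2')$ by integrating the almost-isometry over $S_2^n$ to control the normalization $\Sigma=\delta S$ and then evaluating at the points $x_j$ (your tensor-inner-product phrasing of this last step is just a repackaging of the paper's double sum, and your constant $\eps_2=2\delta\eps_1/(1-\eps_1)$ matches the paper's $(1+\eps_1)/(1-\eps_1)\,\delta$ bound before it is relaxed to $4\eps_1\delta$).
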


\begin{proof}
2') $\Leftrightarrow$ 3') with $\eps_3 = \sqrt{\eps_2}$. 
This equivalence results from identity \eqref{TensorId3}.

3') $\im$ 1') with $\eps_1 = \eps_3/\delta$.
For any $x \in \bK^n$, one observes that the leftmost identity of  \eqref{TensorId12} yields $\langle D,\otimes^k (x \otimes \ol{x}) \rangle = \delta \|x\|_2^{2k}$ by homogeneity.
One also observes that
\begin{align*}
\bigg\langle \sum_{i=1}^N \tau_i \otimes^k (x_i \otimes \ol{x_i} ), \otimes^k (x \otimes \ol{x}) \bigg\rangle
& = \sum_{i=1}^N \tau_i \langle \otimes^k (x_i \otimes \ol{x_i} ), \otimes^k (x \otimes \ol{x}) \rangle\\
& = \sum_{i=1}^N \tau_i \big( |\langle x_i, x \rangle |^2 \big)^k.
\end{align*}
From both these observations, it follows that
\begin{align*}
\bigg| \sum_{i=1}^N \tau_i |\langle x_i,x \rangle|^{2k} - \delta \|x\|_2^{2k} \bigg|
& = \bigg|
\bigg\langle
\sum_{i=1}^N \tau_i \otimes^k (x_i \otimes \ol{x_i} ) -D , \otimes^k (x \otimes \ol{x}) \bigg\rangle \bigg| \\
& \le \bigg\| \sum_{i=1}^N \tau_i \otimes^k (x_i \otimes \ol{x_i} ) -D \bigg\|_2 
\, \Big\| \otimes^k (x \otimes \ol{x}) \Big\|_2\\
& \le \eps_3 \|x\|_2^{2k}.
\end{align*}
Therefore, setting $a_i := (\tau_i/\delta)^{\f{1}{2k}} x_i$ for each $i \in [1:N]$,
the above becomes
$$
\bigg| \sum_{i=1}^N |\langle a_i,x \rangle|^{2k} - \|x\|_2^{2k} \bigg| \le \eps_1 \|x\|_2^{2k},
$$
which reduces to 1') for the matrix $A \in \bK^{N \times n}$ with rows $a_1^*,\ldots,a_N^*$.

1') $\im$(2') with $\eps_2 = 4 \eps_1 \delta$ when $\eps_1 \le 1/2$. 
With $a_1^*,\ldots,a_N^*$ denoting the rows of the matrix $A \in \bK^{N \times n}$, 
the almost isometric embedding takes the form
\be
\label{AIE}
(1-\eps_1) \|x\|_2^{2k} 
\le \sum_{i=1}^N |\langle a_i,x \rangle|^{2k} 
\le (1+\eps_1) \|x\|_2^{2k}
\qquad \mbox{for all } x \in \bK^n.
\ee
First, integrating \eqref{AIE} over $x \in S_2^n$ implies that $1-\eps_1 \le \sum_{i=1}^N \|a_i\|_2^{2k} \delta \le 1+\eps_1$.
Setting $S : = \sum_{i=1}^N \|a_i\|_2^{2k}$,
this means that 
\be
\label{1'2'1}
\f{\delta}{1+\eps_1}  \le \f{1}{S}  \le \f{\delta}{1-\eps_1} .
\ee 
Second, setting $x_i := a_i / \|a_i\|_2 \in S_2^n$ for each $i \in [1:N]$
and selecting  $x=x_j$ in \eqref{AIE} yields
$$
1-\eps_1 \le \sum_{i=1}^N \|a_i\|_2^{2k} |\langle x_i,x_j \rangle|^{2k} \le 1 + \eps_1.
$$
Defining $\tau_i := \|a_i\|_2^{2k} / S$,
multiplying the latter by $\tau_j$ and summing over $j \in [1:N]$ leads to
\be
\label{1'2'2}
\f{1-\eps_1}{S}
\le \sum_{i,j=1}^N \tau_i \tau_j |\langle x_i, x_j \rangle|^{2k}
\le \f{1+\eps_1}{S}.
\ee
From the estimates \eqref{1'2'1} and \eqref{1'2'2}, one finally concludes that
$$
\sum_{i,j=1}^N \tau_i \tau_j |\langle x_i, x_j \rangle|^{2k} \le \f{1+\eps_1}{1-\eps_1} \delta
\le (1+4 \eps_1) \delta,
$$
where the last step used $\eps_1 \le 1/2$.
This is the desired inequality with $\eps_2 = 4 \eps_1 \delta$.
\end{proof}

\begin{acknowledgement}
S. F. is partially supported by grants from the NSF (DMS-2053172) and from the ONR (N00014-20-1-2787).
\end{acknowledgement}


\end{document}